\theoremstyle{plain}
\newtheorem{thm}{Theorem}[section]
\newtheorem*{thm*}{Theorem}
\newtheorem{prop}{Proposition}[section]
\newtheorem*{prop*}{Proposition}
\newtheorem{cor}{Corollary}[section]
\newtheorem*{cor*}{Corollary}
\newtheorem*{lem*}{Lemma}
\theoremstyle{definition}
\newtheorem{defn}{Definition}[section]
\newtheorem*{defn*}{Definition}
\newtheorem*{exmp*}{Example}
\newtheorem*{exmps*}{Examples}
\newtheorem{rem}{Remark}[section]
\newtheorem*{rem*}{Remark}
\newtheorem{rems}{Remarks}[section]
\newtheorem*{rems*}{Remarks}
\newtheorem*{note*}{Note}
\newcommand{\N}{{\mathbb N}}
\newcommand{\Z}{{\mathbb Z}}
\newcommand{\R}{{\mathbb R}}
\newcommand{\C}{{\mathbb C}}
\DeclareMathOperator{\Rep}{Re\,}
\DeclareMathOperator{\Imp}{Im\,}
\DeclareMathOperator{\dist}{dist}
\DeclareMathOperator{\spa}{span}
\begin{document}
\title[On the differentiability of weak solutions]
{On the differentiability of weak solutions\\
of an abstract evolution equation\\
with a scalar type spectral operator\\
on the real axis}
\author[Marat V. Markin]{Marat V. Markin}
\address{
Department of Mathematics\newline
California State University, Fresno\newline
5245 N. Backer Avenue, M/S PB 108\newline
Fresno, CA 93740-8001, USA
}
\email{mmarkin@csufresno.edu}
\dedicatory{}
\keywords{Weak solution, scalar type spectral operator}
\subjclass[2010]{Primary 34G10, 47B40; Secondary 47B15, 47D06, 47D60}
\begin{abstract}
Given the abstract evolution equation
\begin{equation*}
y'(t)=Ay(t),\ t\in \R,
\end{equation*}
with \textit{scalar type spectral operator} $A$ in a complex Banach space, found are conditions \textit{necessary and sufficient} for all \textit{weak solutions} of the equation, which a priori need not be strongly differentiable, to be strongly infinite differentiable on $\R$. The important case of the equation with a \textit{normal operator} $A$ in a complex Hilbert space is obtained immediately as a particular case. Also, proved is the following inherent smoothness improvement effect explaining why the case of the strong finite differentiability of the weak solutions is superfluous: if every weak solution of the equation is strongly differentiable at $0$, then all of them are strongly infinite differentiable on $\R$.
\end{abstract}
\maketitle
\epigraph{\textit{Curiosity is the lust of the mind.}}{Thomas Hobbes}
\section[Introduction]{Introduction}

We find conditions on a \textit{scalar type spectral} operator $A$ in a complex Banach space necessary and sufficient for all \textit{weak solutions} of the evolution equation
\begin{equation}\label{1}
y'(t)=Ay(t),\ t\in \R,
\end{equation}
which a priori need not be strongly differentiable,
to be strongly infinite differentiable on $\R$.
The important case of the equation with a \textit{normal operator} $A$ in a complex Hilbert space is obtained immediately as a particular case.
We also prove the following inherent smoothness improvement effect explaining why the case of the strong finite differentiability of the weak solutions is superfluous: if every weak solution of the equation is strongly differentiable at $0$, then all of them are strongly infinite differentiable on $\R$.

The found results develop those of paper \cite{Markin2011}, where similar consideration is given to the strong differentiability of the weak solutions of the equation
\begin{equation}\label{+}
y'(t)=Ay(t),\ t\ge 0,
\end{equation}
on $[0,\infty)$ and $(0,\infty)$.

\begin{defn}[Weak Solution]\label{ws}\ \\
Let $A$ be a densely defined closed linear operator in a Banach space $X$ and $I$ be an interval of the real axis $\R$. A strongly continuous vector function $y:I\rightarrow X$ is called a {\it weak solution} of
the evolution equation 
\begin{equation}\label{2}
y'(t)=Ay(t),\ t\in I,
\end{equation}
if, for any $g^* \in D(A^*)$,
\begin{equation*}
\dfrac{d}{dt}\langle y(t),g^*\rangle = \langle y(t),A^*g^* \rangle,\ t\in I,
\end{equation*}
where $D(\cdot)$ is the \textit{domain} of an operator, $A^*$ is the operator {\it adjoint} to $A$, and $\langle\cdot,\cdot\rangle$ is the {\it pairing} between
the space $X$ and its dual $X^*$ (cf. \cite{Ball}).
\end{defn}

\begin{rems}\label{remsws}\
\begin{itemize}
\item Due to the \textit{closedness} of $A$, a weak solution of equation \eqref{2} can be equivalently defined to be a strongly continuous vector function $y:I\mapsto X$ such that, for all $t\in I$,
\begin{equation*}
\int_{t_0}^ty(s)\,ds\in D(A)\ \text{and} \ y(t)=y(t_0)+A\int_{t_0}^ty(s)\,ds,
\end{equation*}
where $t_0$ is an arbitrary fixed point of the interval $I$, and is also called a \textit{mild solution} (cf. {\cite[Ch. II, Definition 6.3]{Engel-Nagel}}, see also {\cite[Preliminaries]{Markin2018(2)}}).
\item Such a notion of \textit{weak solution}, which need not be differentiable in the strong sense, generalizes that of \textit{classical} one, strongly differentiable on $I$ and satisfying the equation in the traditional plug-in sense, the classical solutions being precisely the weak ones strongly differentiable on $I$.
\item As is easily seen $y:\R\to X$ is a weak solution of equation \eqref{1} \textit{iff} 
\[
y_+(t):=y(t),\ t\ge 0,
\]
is a weak solution of equation \eqref{+} and 
\[
y_-(t):=y(-t),\ t\ge 0,
\]
is a weak solution  of the equation
\begin{equation}\label{-}
y'(t)=-Ay(t),\ t\ge 0.
\end{equation}
\item When a closed densely defined linear operator $A$
in a complex Banach space $X$ generates a strongly continuous group $\left\{T(t) \right\}_{t\in \R}$ of  bounded linear operators (see, e.g., \cite{Hille-Phillips,Engel-Nagel}), i.e., the associated \textit{abstract Cauchy problem} (\textit{ACP})
\begin{equation}\label{ACP}
\begin{cases}
y'(t)=Ay(t),\ t\in \R,\\
y(0)=f
\end{cases}
\end{equation}
is \textit{well-posed} (cf. {\cite[Ch. II, Definition 6.8]{Engel-Nagel}}), the weak solutions of equation \eqref{1} are the orbits
\begin{equation}\label{group}
y(t)=T(t)f,\ t\in \R,
\end{equation}
with $f\in X$ (cf. {\cite[Ch. II, Proposition 6.4]{Engel-Nagel}}, see also {\cite[Theorem]{Ball}}), whereas the classical ones are those with $f\in D(A)$
(see, e.g., {\cite[Ch. II, Proposition 6.3]{Engel-Nagel}}). 
\item In our discourse, the associated \textit{ACP} may be \textit{ill-posed}, i.e., the scalar type spectral operator $A$ need not generate a strongly continuous group of  bounded linear operators (cf. \cite{Markin2002(2)}). 
\end{itemize} 
\end{rems} 

\section[Preliminaries]{Preliminaries}

Here, for the reader's convenience, we outline certain essential preliminaries.

Henceforth, unless specified otherwise, $A$ is supposed to be a {\it scalar type spectral operator} in a complex Banach space $(X,\|\cdot\|)$ with strongly $\sigma$-additive \textit{spectral measure} (the \textit{resolution of the identity}) $E_A(\cdot)$ assigning to each Borel set $\delta$ of the complex plane $\C$ a projection operator $E_A(\delta)$ on $X$ and having the operator's \textit{spectrum} $\sigma(A)$ as its {\it support} \cite{Survey58,Dun-SchIII}.

Observe that, in a complex finite-dimensional space, 
the scalar type spectral operators are all linear operators on the space, for which there is an \textit{eigenbasis} (see, e.g., \cite{Survey58,Dun-SchIII}) and, in a complex Hilbert space, the scalar type spectral operators are precisely all those that are similar to the {\it normal} ones \cite{Wermer}.

Associated with a scalar type spectral operator in a complex Banach space is the {\it Borel operational calculus} analogous to that for a \textit{normal operator} in a complex Hilbert space \cite{Survey58,Dun-SchII,Dun-SchIII,Plesner}, which assigns to any Borel measurable function $F:\sigma(A)\to \C$ a scalar type spectral operator
\begin{equation*}
F(A):=\int\limits_{\sigma(A)} F(\lambda)\,dE_A(\lambda)
\end{equation*}
(see \cite{Survey58,Dun-SchIII}).

In particular,
\begin{equation}\label{power}
A^n=\int\limits_{\sigma(A)} \lambda^n\,dE_A(\lambda),\ n\in\Z_+,
\end{equation}
($\Z_+:=\left\{0,1,2,\dots\right\}$ is the set of \textit{nonnegative integers}, $A^0:=I$, $I$ is the \textit{identity operator} on $X$) and
\begin{equation}\label{exp}
e^{zA}:=\int\limits_{\sigma(A)} e^{z\lambda}\,dE_A(\lambda),\ z\in\C.
\end{equation}

The properties of the {\it spectral measure} and {\it operational calculus}, exhaustively delineated in \cite{Survey58,Dun-SchIII}, underlie the entire subsequent discourse. Here, we underline a few facts of particular importance.

Due to its {\it strong countable additivity}, the spectral measure $E_A(\cdot)$ is {\it bounded} \cite{Dun-SchI,Dun-SchIII}, i.e., there is such an $M>0$ that, for any Borel set $\delta\subseteq \C$,
\begin{equation}\label{bounded}
\|E_A(\delta)\|\le M.
\end{equation}

Observe that the notation $\|\cdot\|$ is used here to designate the norm in the space $L(X)$ of all bounded linear operators on $X$. We adhere to this rather conventional economy of symbols in what follows also adopting the same notation for the norm in the dual space $X^*$. 

For any $f\in X$ and $g^*\in X^*$, the \textit{total variation measure} $v(f,g^*,\cdot)$ of the complex-valued Borel measure $\langle E_A(\cdot)f,g^* \rangle$ is a {\it finite} positive Borel measure with
\begin{equation}\label{tv}
v(f,g^*,\C)=v(f,g^*,\sigma(A))\le 4M\|f\|\|g^*\|
\end{equation}
(see, e.g., \cite{Markin2004(1),Markin2004(2)}).

Also (Ibid.), for a Borel measurable function $F:\C\to \C$, $f\in D(F(A))$, $g^*\in X^*$, and a Borel set $\delta\subseteq \C$,
\begin{equation}\label{cond(ii)}
\int\limits_\delta|F(\lambda)|\,dv(f,g^*,\lambda)
\le 4M\|E_A(\delta)F(A)f\|\|g^*\|.
\end{equation}
In particular, for $\delta=\sigma(A)$,
$E_A(\sigma(A))=I$ and
\begin{equation}\label{cond(i)}
\int\limits_{\sigma(A)}|F(\lambda)|\,d v(f,g^*,\lambda)\le 4M\|F(A)f\|\|g^*\|.
\end{equation}

Observe that the constant $M>0$ in \eqref{tv}--\eqref{cond(i)} is from 
\eqref{bounded}.

Further, for a Borel measurable function $F:\C\to [0,\infty)$, a Borel set $\delta\subseteq \C$, a sequence $\left\{\Delta_n\right\}_{n=1}^\infty$ 
of pairwise disjoint Borel sets in $\C$, and 
$f\in X$, $g^*\in X^*$,
\begin{equation}\label{decompose}
\int\limits_{\delta}F(\lambda)\,dv(E_A(\cup_{n=1}^\infty \Delta_n)f,g^*,\lambda)
=\sum_{n=1}^\infty \int\limits_{\delta\cap\Delta_n}F(\lambda)\,dv(E_A(\Delta_n)f,g^*,\lambda).
\end{equation}

Indeed, since, for any Borel sets $\delta,\sigma\subseteq \C$,
\begin{equation*}
E_A(\delta)E_A(\sigma)=E_A(\delta\cap\sigma)
\end{equation*}
\cite{Survey58,Dun-SchIII}, 
for the total variation measure,
\begin{equation*}
v(E_A(\delta)f,g^*,\sigma)=v(f,g^*,\delta\cap\sigma).
\end{equation*}

Whence, due to the {\it nonnegativity} of $F(\cdot)$ (see, e.g., \cite{Halmos}),
\begin{multline*}
\int\limits_\delta F(\lambda)\,dv(E_A(\cup_{n=1}^\infty \Delta_n)f,g^*,\lambda)
=\int\limits_{\delta\cap\cup_{n=1}^\infty \Delta_n}F(\lambda)\,dv(f,g^*,\lambda)
\\
\ \
=\sum_{n=1}^\infty \int\limits_{\delta\cap\Delta_n}F(\lambda)\,dv(f,g^*,\lambda)
=\sum_{n=1}^\infty \int\limits_{\delta\cap\Delta_n}F(\lambda)\,dv(E_A(\Delta_n)f,g^*,\lambda).
\hfill
\end{multline*}

The following statement, allowing to characterize the domains of Borel measurable functions of a scalar type spectral operator in terms of positive Borel measures, is fundamental for our discourse.

\begin{prop}[{\cite[Proposition $3.1$]{Markin2002(1)}}]\label{prop}\ \\
Let $A$ be a scalar type spectral operator in a complex Banach space $(X,\|\cdot\|)$ with spectral measure $E_A(\cdot)$ and $F:\sigma(A)\to \C$ be a Borel measurable function. Then $f\in D(F(A))$ iff
\begin{enumerate}
\item[(i)] for each $g^*\in X^*$, 
$\displaystyle \int\limits_{\sigma(A)} |F(\lambda)|\,d v(f,g^*,\lambda)<\infty$ and
\item[(ii)] $\displaystyle \sup_{\{g^*\in X^*\,|\,\|g^*\|=1\}}
\int\limits_{\{\lambda\in\sigma(A)\,|\,|F(\lambda)|>n\}}
|F(\lambda)|\,dv(f,g^*,\lambda)\to 0,\ n\to\infty$,
\end{enumerate}
where $v(f,g^*,\cdot)$ is the total variation measure of $\langle E_A(\cdot)f,g^* \rangle$.
\end{prop}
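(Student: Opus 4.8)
The plan is to work directly from the construction of the Borel operational calculus. Put $e_n:=\{\lambda\in\sigma(A):|F(\lambda)|\le n\}$ and $\delta_n:=\{\lambda\in\sigma(A):|F(\lambda)|>n\}$, $n\in\N$; then $e_n\subseteq e_{n+1}$ with $\bigcup_{n=1}^\infty e_n=\sigma(A)$ (as $F$ is $\C$-valued), each $F\chi_{e_n}$ is a bounded Borel function, and, by the way $F(A)$ is defined, $f\in D(F(A))$ \textit{iff} the sequence $(F\chi_{e_n})(A)f=\int_{e_n}F(\lambda)\,dE_A(\lambda)f$ converges in $X$, its limit being $F(A)f$ (see \cite{Survey58,Dun-SchIII}). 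Besides the facts \eqref{bounded}--\eqref{cond(i)} recorded in the Preliminaries, the two elementary tools I would use are the identity $\langle G(A)h,g^*\rangle=\int_{\sigma(A)}G(\lambda)\,d\langle E_A(\lambda)h,g^*\rangle$, valid for every \textit{bounded} Borel $G:\sigma(A)\to\C$, $h\in X$, $g^*\in X^*$, and the domination $\bigl|\int_\delta G\,d\langle E_A(\cdot)h,g^*\rangle\bigr|\le\int_\delta|G(\lambda)|\,dv(h,g^*,\lambda)$ for any Borel $\delta\subseteq\sigma(A)$.

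For the \textit{necessity}, I would assume $f\in D(F(A))$ and read condition (i) off directly from \eqref{cond(i)}, which bounds $\int_{\sigma(A)}|F(\lambda)|\,dv(f,g^*,\lambda)$ by $4M\|F(A)f\|\,\|g^*\|<\infty$ for each $g^*\in X^*$. For (ii) I would apply \eqref{cond(ii)} with $\delta=\delta_n$, obtaining
\[
\sup_{\{g^*\in X^*\,|\,\|g^*\|=1\}}\int_{\delta_n}|F(\lambda)|\,dv(f,g^*,\lambda)\le 4M\,\bigl\|E_A(\delta_n)F(A)f\bigr\|,
\]
and then use that $\delta_n\downarrow\varnothing$ (again because $F$ is $\C$-valued), so the strong countable additivity of $E_A(\cdot)$ drives the right-hand side to $0$ as $n\to\infty$; this is exactly (ii).

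For the \textit{sufficiency}, I would assume (i) and (ii) and show that $\{(F\chi_{e_n})(A)f\}_{n=1}^\infty$ is Cauchy. For $m>n$ one has $(F\chi_{e_m})(A)f-(F\chi_{e_n})(A)f=(F\chi_{e_m\setminus e_n})(A)f$, where $F\chi_{e_m\setminus e_n}$ is bounded and $e_m\setminus e_n\subseteq\delta_n$; hence, by the identity and the domination above, for each $g^*\in X^*$ with $\|g^*\|=1$, $\bigl|\langle(F\chi_{e_m\setminus e_n})(A)f,g^*\rangle\bigr|\le\int_{\delta_n}|F(\lambda)|\,dv(f,g^*,\lambda)$. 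Taking the supremum over such $g^*$ gives $\bigl\|(F\chi_{e_m})(A)f-(F\chi_{e_n})(A)f\bigr\|\le\sup_{\|g^*\|=1}\int_{\delta_n}|F(\lambda)|\,dv(f,g^*,\lambda)$, which tends to $0$ by (ii), uniformly in $m>n$ (condition (i) is what makes these scalar integrals finite, so the estimate is not vacuous). Thus the sequence converges, and therefore $f\in D(F(A))$ by the definition of $F(A)$.

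The step I expect to be the crux is the sufficiency: converting the family of scalar estimates, governed by the positive measures $v(f,g^*,\cdot)$, into genuine norm convergence of the vector sequence $(F\chi_{e_n})(A)f$ — this is precisely where the \textit{uniform} (in $g^*$) decay demanded in (ii), as opposed to the mere pointwise finiteness (i), is indispensable. A minor point to treat with care is the invocation of \eqref{cond(ii)} in the necessity part; one could sidestep it by noting instead that $\|E_A(\delta_n)F(A)f\|\to0$ and bounding the tails $\int_{\delta_n}|F|\,dv(f,g^*,\cdot)$ directly via the total-variation domination applied to $F\chi_{\delta_n\cap e_k}$ followed by a passage to the limit in $k$.
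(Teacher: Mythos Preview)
The paper does not give its own proof of this proposition; it is quoted from \cite[Proposition~3.1]{Markin2002(1)} and used as a ready-made tool, so there is nothing in the present paper to compare your argument against.

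That said, your outline is correct and is the natural route. The necessity direction is immediate from \eqref{cond(i)} and \eqref{cond(ii)} once one observes that $\delta_n\downarrow\varnothing$ forces $E_A(\delta_n)F(A)f\to 0$ by strong $\sigma$-additivity. The sufficiency is exactly where the work lies, and your Cauchy argument via duality is the right mechanism: for $m>n$ the difference $(F\chi_{e_m})(A)f-(F\chi_{e_n})(A)f=(F\chi_{e_m\setminus e_n})(A)f$ has norm dominated, uniformly in $m$, by $\sup_{\|g^*\|=1}\int_{\delta_n}|F|\,dv(f,g^*,\cdot)$, and (ii) closes the loop. Your remark that (i) alone would not suffice here is on point.

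One caveat worth recording if you intend the proof to be self-contained rather than merely consistent with this paper's preliminaries: the estimates \eqref{cond(ii)}--\eqref{cond(i)} are themselves cited from \cite{Markin2004(1),Markin2004(2)}, works \emph{posterior} to \cite{Markin2002(1)}, so you should check that invoking them in the necessity direction is not circular. It is not: for $f\in D(F(A))$ and $h:=F(A)f$ one has $\langle E_A(\delta)h,g^*\rangle=\int_\delta F\,d\langle E_A(\cdot)f,g^*\rangle$ by the operational calculus, whence the total variation of the right-hand side over $\delta$ is bounded by $v(h,g^*,\delta)\le 4M\|E_A(\delta)h\|\,\|g^*\|$ via \eqref{tv}; this yields \eqref{cond(ii)} directly, without appealing to the proposition being proved.
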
 

The succeeding key theorem provides a description of the weak solutions of equation \eqref{+} with a scalar type spectral operator $A$ in a complex Banach space.

\begin{thm}[{\cite[Theorem $4.2$]{Markin2002(1)}} with $T=\infty$]\label{GWS+}\ \\
Let $A$ be a scalar type spectral operator in a complex Banach space $(X,\|\cdot\|)$. A vector function $y:[0,\infty) \to X$ is a weak solution 
of equation \eqref{+} iff there is an $\displaystyle f \in \bigcap_{t\ge 0}D(e^{tA})$ such that
\begin{equation*}
y(t)=e^{tA}f,\ t\ge 0,
\end{equation*}
the operator exponentials understood in the sense of the Borel operational calculus (see \eqref{exp}).
\end{thm}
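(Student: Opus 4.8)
The plan is to prove the two implications of Theorem~\ref{GWS+} separately, in each case translating everything into a statement about the finite complex Borel measures $\langle E_A(\cdot)f,g^*\rangle$ by means of the operational calculus, Proposition~\ref{prop}, and the estimates \eqref{tv}--\eqref{cond(i)}.

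\emph{Sufficiency.} Suppose $f\in\bigcap_{t\ge0}D(e^{tA})$ and set $y(t):=e^{tA}f$. First I would establish strong continuity on $[0,\infty)$: fixing $t_0$ and writing, with $\delta_n:=\{|\lambda|\le n\}$,
\[
e^{tA}f-e^{t_0A}f=\int_{\delta_n}\bigl(e^{t\lambda}-e^{t_0\lambda}\bigr)\,dE_A(\lambda)f+E_A(\C\setminus\delta_n)\bigl(e^{tA}f-e^{t_0A}f\bigr),
\]
the tail is made small uniformly for $t$ in a bounded neighbourhood of $t_0$ by applying \eqref{cond(ii)} with $F\equiv1$ and with $F=e^{(t_0+1)\lambda}$ together with $|e^{t\lambda}|\le1+|e^{(t_0+1)\lambda}|$, while for each fixed $n$ the first term tends to $0$ as $t\to t_0$ by uniform continuity of $\lambda\mapsto e^{t\lambda}$ on the compact $\delta_n$ and the standard bound $\bigl\|\int_{\delta_n}\phi\,dE_A\bigr\|\le4M\sup_{\delta_n}|\phi|$. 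Next, for $g^*\in D(A^*)$ put $\mu:=\langle E_A(\cdot)f,g^*\rangle$ and $\nu:=\langle E_A(\cdot)f,A^*g^*\rangle$. On any bounded Borel set $\delta$ one has $E_A(\delta)f\in D(A)$ and $AE_A(\delta)f=\int_\delta\lambda\,dE_A(\lambda)f$, whence $\nu(\delta)=\langle AE_A(\delta)f,g^*\rangle=\int_\delta\lambda\,d\mu(\lambda)$; therefore $\int_\delta|\lambda|\,dv(f,g^*,\lambda)=v(f,A^*g^*,\delta)\le v(f,A^*g^*,\C)<\infty$, and letting $\delta\uparrow\sigma(A)$ gives $\int_{\sigma(A)}|\lambda|\,dv(f,g^*,\lambda)<\infty$ and $d\nu=\lambda\,d\mu$. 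Now Fubini's theorem (legitimate because $s\mapsto\|e^{sA}f\|$ is locally bounded, by strong continuity, and $\int|e^{s\lambda}|\,dv(f,A^*g^*,\lambda)\le4M\|e^{sA}f\|\,\|A^*g^*\|$ by \eqref{cond(i)}) yields
\[
\int_{t_0}^{t}\langle y(s),A^*g^*\rangle\,ds=\int_{\sigma(A)}\frac{e^{t\lambda}-e^{t_0\lambda}}{\lambda}\,d\nu(\lambda)=\int_{\sigma(A)}\bigl(e^{t\lambda}-e^{t_0\lambda}\bigr)\,d\mu(\lambda)=\langle y(t),g^*\rangle-\langle y(t_0),g^*\rangle,
\]
with the quotient read as $t-t_0$ at $\lambda=0$; since $s\mapsto\langle y(s),A^*g^*\rangle$ is continuous, differentiating in $t$ gives the defining identity of Definition~\ref{ws}, so $y$ is a weak solution.

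\emph{Necessity.} Conversely, let $y$ be a weak solution of \eqref{+}; I claim $f:=y(0)$ works. For a bounded Borel set $\delta$ the operator $A_\delta:=A|_{E_A(\delta)X}$ is bounded on the reducing subspace $E_A(\delta)X$, and one checks that $E_A(\delta)^*h^*\in D(A^*)$ with $A^*E_A(\delta)^*h^*=\bigl(AE_A(\delta)\bigr)^*h^*$ for every $h^*\in X^*$; consequently $z(t):=E_A(\delta)y(t)$ is a strongly continuous weak --- hence classical --- solution of $z'=A_\delta z$ in $E_A(\delta)X$, so $z(t)=e^{tA_\delta}E_A(\delta)y(0)=\int_\delta e^{t\lambda}\,dE_A(\lambda)y(0)$ by the bounded operational calculus. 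Pairing with $g^*\in X^*$ and writing $\mu:=\langle E_A(\cdot)y(0),g^*\rangle$ gives $\int_\delta e^{t\lambda}\,d\mu(\lambda)=\langle E_A(\delta)y(t),g^*\rangle$ for every bounded Borel $\delta$; since $|\langle E_A(\delta)y(t),g^*\rangle|\le M\|E_A(\Delta)y(t)\|\,\|g^*\|$ whenever $\delta\subseteq\Delta$, the elementary inequality $v(\rho,\Delta)\le4\sup_{\delta\subseteq\Delta}|\rho(\delta)|$ for complex measures yields, for bounded $\Delta$,
\[
\int_\Delta|e^{t\lambda}|\,dv(y(0),g^*,\lambda)\le4M\|E_A(\Delta)y(t)\|\,\|g^*\|.
\]
Taking $\Delta=\{|\lambda|\le n\}\uparrow\sigma(A)$ gives condition (i) of Proposition~\ref{prop} for $F=e^{t\cdot}$, and taking $\Delta=\{\lambda\in\sigma(A):\Rep\lambda>c\}$ and letting $c\to\infty$ --- noting $\{|e^{t\lambda}|>n\}=\{\Rep\lambda>(\ln n)/t\}$ for $t>0$ and $E_A(\{\Rep\lambda>c\})y(t)\to0$ by $\sigma$-additivity, uniformly in $g^*$ --- gives condition (ii); hence $y(0)\in D(e^{tA})$ for all $t\ge0$. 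Finally $\langle y(t),g^*\rangle=\lim_{n\to\infty}\int_{\{|\lambda|\le n\}}e^{t\lambda}\,d\mu=\int_{\sigma(A)}e^{t\lambda}\,d\mu=\langle e^{tA}y(0),g^*\rangle$ for all $g^*\in X^*$, so $y(t)=e^{tA}y(0)$.

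\emph{Expected main obstacle.} The delicate part is the necessity direction --- extracting the strong domain membership $y(0)\in\bigcap_{t\ge0}D(e^{tA})$ from the mere existence of a (a priori nowhere differentiable) weak solution. The reduction $E_A(\delta)y(t)=\int_\delta e^{t\lambda}\,dE_A(\lambda)y(0)$ on bounded sets, obtained by passing to the reducing subspace $E_A(\delta)X$ on which $A$ acts boundedly, is the crux; once the uniform total-variation estimate above is in hand, Proposition~\ref{prop} finishes the job. In the sufficiency direction the only point requiring care is that one should \emph{not} attempt to differentiate $\langle y(t),g^*\rangle=\int e^{t\lambda}\,d\mu(\lambda)$ under the integral sign --- the integrand $\lambda e^{t\lambda}$ need not be dominated by a $v(f,g^*,\cdot)$-integrable function --- but should instead verify the integrated identity via $d\nu=\lambda\,d\mu$ and Fubini.
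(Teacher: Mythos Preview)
The paper does not prove Theorem~\ref{GWS+}; it is quoted as a preliminary result from \cite[Theorem~4.2]{Markin2002(1)}, so there is no in-paper proof to compare against. Your proposal is therefore an independent reconstruction, and it is essentially correct and follows the natural route one would expect the original to take: in the sufficiency direction you verify the defining identity of Definition~\ref{ws} via the measure relation $d\nu=\lambda\,d\mu$ and Fubini (correctly avoiding differentiation under the integral), and in the necessity direction you localize to the reducing subspaces $E_A(\delta)X$ with $\delta$ bounded, where $A$ acts boundedly and weak solutions are automatically orbits of the bounded group $e^{tA_\delta}$, then feed the resulting total-variation estimate into Proposition~\ref{prop}.

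Two small points to tighten. First, your key estimate
\[
\int_\Delta|e^{t\lambda}|\,dv(y(0),g^*,\lambda)\le 4M\|E_A(\Delta)y(t)\|\,\|g^*\|
\]
is proved for \emph{bounded} $\Delta$, yet you immediately apply it with $\Delta=\{\Rep\lambda>c\}$, which is generally unbounded; the extension is routine (intersect with $\{|\lambda|\le n\}$, pass to the limit using monotone convergence on the left and strong $\sigma$-additivity of $E_A$ on the right, which is legitimate once condition~(i) is already in hand), but it should be said. Second, in the strong-continuity argument your bound $|e^{t\lambda}|\le 1+|e^{(t_0+1)\lambda}|$ holds for $0\le t\le t_0+1$, and the way you convert \eqref{cond(ii)} into a norm bound on the tail $E_A(\C\setminus\delta_n)e^{tA}f$ goes through a Hahn--Banach duality step that you leave implicit; spelling out that
\[
\|E_A(\C\setminus\delta_n)e^{tA}f\|=\sup_{\|g^*\|=1}\Bigl|\int_{\C\setminus\delta_n}e^{t\lambda}\,d\langle E_A(\lambda)f,g^*\rangle\Bigr|
\le 4M\bigl(\|E_A(\C\setminus\delta_n)f\|+\|E_A(\C\setminus\delta_n)e^{(t_0+1)A}f\|\bigr)
\]
would make the uniformity in $t$ transparent. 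With these clarifications the argument is complete.
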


\begin{rem}
Theorem \ref{GWS+} generalizes {\cite[Theorem $3.1$]{Markin1999}}, its counterpart for a normal operator $A$ in a complex Hilbert space.
\end{rem} 

We also need the following characterizations of a particular weak solution's of equation \eqref{+} with a scalar type spectral operator $A$ in a complex Banach space being strongly differentiable on a subinterval $I$ of $[0,\infty)$.

\begin{prop}[{\cite[Proposition $3.1$]{Markin2011}} with $T=\infty$]\label{Prop}\ \\
Let $n\in\N$ and $I$ be a subinterval of $[0,\infty)$.  A weak solution $y(\cdot)$ of equation \eqref{+} is $n$ times strongly differentiable on $I$ iff
\begin{equation*}
y(t) \in D(A^n),\ t\in I,
\end{equation*}
in which case, 
\begin{equation*}
y^{(k)}(t)=A^ky(t),\ k=1,\dots,n,t\in I.
\end{equation*}
\end{prop}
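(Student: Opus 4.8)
The plan is to prove the two implications separately, relying on Theorem~\ref{GWS+} for the description of the weak solution as $y(t)=e^{tA}f$ and on Proposition~\ref{prop} to translate membership in $D(A^n)$ into an integrability condition for the total variation measures $v(y(t),g^*,\cdot)$. The ``if'' direction is where the real work lies; the ``only if'' direction (and the formula $y^{(k)}(t)=A^ky(t)$) is comparatively routine and I would dispatch it first.

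First, for the ``only if'' part: suppose $y(\cdot)$ is $n$ times strongly differentiable on $I$. For $n=1$, fix $t\in I$. Using the mild-solution form from the first Remark after Definition~\ref{ws}, $y(t)=y(t_0)+A\int_{t_0}^t y(s)\,ds$, and the strong derivative $y'(t)=\lim_{h\to0}h^{-1}(y(t+h)-y(t))$ exists; I would show $h^{-1}\bigl(\int_{t_0}^{t+h}y(s)\,ds-\int_{t_0}^t y(s)\,ds\bigr)\to y(t)$ by continuity of $y$, and then invoke the \emph{closedness} of $A$ to pass to the limit in $A\bigl(h^{-1}\int_t^{t+h}y(s)\,ds\bigr)$, concluding $y(t)\in D(A)$ and $y'(t)=Ay(t)$. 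Iterating: if $y$ is $n$ times differentiable, then $y'=Ay$ is $(n-1)$ times differentiable, so by induction $y(t)\in D(A^{n-1})$ for the function $Ay$, i.e.\ $y(t)\in D(A^n)$, with $y^{(k)}(t)=A^ky(t)$.

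Next, the ``if'' direction. Assume $y(t)\in D(A^n)$ for all $t\in I$; by Theorem~\ref{GWS+}, $y(t)=e^{tA}f$. It suffices to treat $n=1$ and iterate, since $A^ny(t)=A^{n-1}(Ay(t))$ and $Ay(\cdot)=e^{tA}(Af)$ is again a weak solution (of the same equation, with initial vector $Af\in\bigcap_{t\ge0}D(e^{tA})$ once one checks this). So the crux is: if $e^{tA}f\in D(A)$ for all $t\in I$, then $t\mapsto e^{tA}f$ is strongly differentiable on $I$ with derivative $Ae^{tA}f$. Fixing $t$ and $h$ small, I would estimate $\bigl\|h^{-1}(e^{(t+h)A}f-e^{tA}f)-Ae^{tA}f\bigr\|$ via the pairing: for unit $g^*\in X^*$, the quantity $\langle\cdot,g^*\rangle$ of that vector equals $\int_{\sigma(A)}\bigl(h^{-1}(e^{(t+h)\lambda}-e^{t\lambda})-\lambda e^{t\lambda}\bigr)\,d\langle E_A(\lambda)f,g^*\rangle$, whose modulus is bounded by $\int_{\sigma(A)}\bigl|h^{-1}(e^{(t+h)\lambda}-e^{t\lambda})-\lambda e^{t\lambda}\bigr|\,dv(f,g^*,\lambda)$. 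Writing $h^{-1}(e^{(t+h)\lambda}-e^{t\lambda})-\lambda e^{t\lambda}=e^{t\lambda}\bigl(h^{-1}(e^{h\lambda}-1)-\lambda\bigr)$ and using the elementary bound $|h^{-1}(e^{h\lambda}-1)-\lambda|\le |\lambda|^2|h|\,e^{|h||\lambda|}$ (or $\le C(\varepsilon)|\lambda|$ crudely), one gets an integrand dominated by an integrable function of $\lambda$ against $v(f,g^*,\cdot)$: indeed, $|\lambda e^{t\lambda}|$ is integrable against $v(f,g^*,\cdot)$ precisely because $e^{tA}f\in D(A)$, via Proposition~\ref{prop}(i) applied to $F(\lambda)=\lambda e^{t\lambda}$ (after a small argument that $f\in D(Ae^{tA})$ when $e^{tA}f\in D(A)$, using the operational calculus identity $Ae^{tA}=\int\lambda e^{t\lambda}\,dE_A$). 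The main obstacle is making the limit \emph{uniform in $g^*$}: one cannot simply invoke dominated convergence pointwise in $g^*$, so I would split $\sigma(A)$ into $\{|\lambda|\le R\}$ and $\{|\lambda|>R\}$; on the bounded part the factor $h$ in the elementary estimate gives smallness uniformly once $R$ is fixed (using $v(f,g^*,\sigma(A))\le 4M\|f\|$ from \eqref{tv}), and on the tail $\{|\lambda|>R\}$ the estimate $|e^{t\lambda}(h^{-1}(e^{h\lambda}-1)-\lambda)|\le C|\lambda e^{t\lambda}|+|\lambda e^{(t+h)\lambda}|\le C'\bigl(|\lambda e^{t\lambda}|+|\lambda e^{(t+h)\lambda}|\bigr)$ combined with \eqref{cond(ii)} gives a bound by $C''\bigl(\|E_A(\{|\lambda|>R\})Ae^{tA}f\| + \|E_A(\{|\lambda|>R\})Ae^{(t+h)A}f\|\bigr)$, which tends to $0$ as $R\to\infty$ by strong $\sigma$-additivity of $E_A(\cdot)$ — uniformly, because there is no $g^*$ left in it. Care is needed that $t+h$ still lies in a fixed compact subinterval so these tail terms are controlled uniformly in $h$; since $I$ may not be closed, I would first reduce to an arbitrary compact subinterval of $I$ containing $t$ in its interior (or handle an endpoint one-sidedly).

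The expected main difficulty, then, is precisely this passage from pointwise (in $\lambda$) convergence of the difference quotient to convergence in the Banach-space norm, i.e.\ the uniformity over the unit ball of $X^*$; everything else is bookkeeping with the operational calculus and the measure estimates \eqref{tv}--\eqref{decompose} already recorded in the Preliminaries. Once $n=1$ is established, the induction on $n$ is immediate, and the formula $y^{(k)}(t)=A^ky(t)$ for $k=1,\dots,n$ falls out of the same induction.
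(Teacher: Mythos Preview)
The paper does not supply its own proof of this proposition: it is quoted from \cite[Proposition~3.1]{Markin2011} as a preliminary fact, so there is no in-paper argument to compare your proposal against.

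On its own merits, your outline is along reasonable lines but contains a genuine gap in the ``if'' direction. In your tail estimate you bound the integrand on $\{|\lambda|>R\}$ by a constant times $|\lambda e^{t\lambda}|+|\lambda e^{(t+h)\lambda}|$ and then, via \eqref{cond(ii)}, arrive at a bound of the shape
\[
C\bigl(\|E_A(\{|\lambda|>R\})Ae^{tA}f\|+\|E_A(\{|\lambda|>R\})Ae^{(t+h)A}f\|\bigr).
\]
You correctly note that $g^*$ has disappeared, but you still need this to vanish as $R\to\infty$ \emph{uniformly in $h$}, and merely keeping $t+h$ in a compact subinterval of $I$ does not deliver that: strong $\sigma$-additivity of $E_A(\cdot)$ gives $E_A(\{|\lambda|>R\})x\to 0$ for each \emph{fixed} $x$, not uniformly over the family $\{Ae^{(t+h)A}f:|h|\le\delta\}$, whose relative compactness would require the continuity of $h\mapsto Ae^{(t+h)A}f$ that is part of what you are proving. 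The circularity is real.

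A clean repair is to replace the $h$-dependent majorant by a fixed one: for $|h|\le\delta$ one has $e^{(t+h)\Rep\lambda}\le e^{(t+\delta)\Rep\lambda}+e^{(t-\delta)\Rep\lambda}$, so the tail is dominated (again via \eqref{cond(ii)}) by
\[
C'\bigl(\|E_A(\{|\lambda|>R\})Ae^{(t+\delta)A}f\|+\|E_A(\{|\lambda|>R\})Ae^{(t-\delta)A}f\|\bigr),
\]
with $\delta>0$ chosen so that $t\pm\delta\in I$ (one-sidedly at an endpoint). These two vectors are fixed, so the strong $\sigma$-additivity now does give the required uniformity in $h$. With this adjustment (and the parallel fix in the induction step, where the same issue recurs), your argument goes through.
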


Subsequently, the frequent terms {\it ``spectral measure"} and {\it ``operational calculus"} are abbreviated to {\it s.m.} and {\it o.c.}, respectively.

\section{General Weak Solution}

\begin{thm}[General Weak Solution]\label{GWS}\ \\
Let $A$ be a scalar type spectral operator in a complex Banach space $(X,\|\cdot\|)$. A vector function $y:\R \to X$ is a weak solution 
of equation \eqref{1} iff there is an $\displaystyle f \in \bigcap_{t\in \R}D(e^{tA})$ such that
\begin{equation}\label{expf}
y(t)=e^{tA}f,\ t\in \R,
\end{equation}
the operator exponentials understood in the sense of the Borel operational calculus (see \eqref{exp}).
\end{thm}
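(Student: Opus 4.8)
The plan is to reduce Theorem \ref{GWS} to the already-established Theorem \ref{GWS+} by exploiting the third bullet in Remarks \ref{remsws}, which splits a weak solution on $\R$ into a weak solution of \eqref{+} on $[0,\infty)$ and a weak solution of \eqref{-} on $[0,\infty)$. The scaffolding is entirely symmetric, so I would first note that $-A$ is again a scalar type spectral operator, with s.m. $E_{-A}(\delta)=E_A(-\delta)$ for Borel $\delta\subseteq\C$, and that its o.c. satisfies $e^{t(-A)}=\int_{\sigma(-A)}e^{t\lambda}\,dE_{-A}(\lambda)=\int_{\sigma(A)}e^{-t\lambda}\,dE_A(\lambda)=e^{(-t)A}$ for $t\ge 0$. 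Consequently $D(e^{t(-A)})=D(e^{(-t)A})$ and applying Theorem \ref{GWS+} to equation \eqref{-} gives: $z:[0,\infty)\to X$ is a weak solution of \eqref{-} iff $z(t)=e^{-tA}g$ for some $g\in\bigcap_{t\ge 0}D(e^{-tA})=\bigcap_{t\le 0}D(e^{tA})$.

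Next I would put the two descriptions together. Given a weak solution $y:\R\to X$ of \eqref{1}, by the cited Remark $y_+(t)=y(t)$, $t\ge 0$, is a weak solution of \eqref{+}, so Theorem \ref{GWS+} furnishes $f\in\bigcap_{t\ge 0}D(e^{tA})$ with $y(t)=e^{tA}f$ for $t\ge 0$; and $y_-(t)=y(-t)$, $t\ge 0$, is a weak solution of \eqref{-}, so the previous paragraph furnishes $g\in\bigcap_{t\le 0}D(e^{tA})$ with $y(-t)=e^{-tA}g$, i.e. $y(s)=e^{sA}g$ for $s\le 0$. Setting $s=0$ gives $f=y(0)=g$, so $f=g\in\bigcap_{t\ge 0}D(e^{tA})\cap\bigcap_{t\le 0}D(e^{tA})=\bigcap_{t\in\R}D(e^{tA})$ and $y(t)=e^{tA}f$ for all $t\in\R$, which is \eqref{expf}. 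Conversely, if $f\in\bigcap_{t\in\R}D(e^{tA})$ and $y(t)=e^{tA}f$, then $f\in\bigcap_{t\ge 0}D(e^{tA})$ so $y_+$ is a weak solution of \eqref{+} by Theorem \ref{GWS+}, and $f\in\bigcap_{t\ge 0}D(e^{t(-A)})$ so $y_-(t)=e^{-tA}f=e^{t(-A)}f$ is a weak solution of \eqref{-} by Theorem \ref{GWS+} applied to $-A$; by the Remark, $y$ is then a weak solution of \eqref{1}.

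I would expect the only genuine point requiring care to be the justification that $-A$ is a scalar type spectral operator with the stated reflected resolution of the identity and the resulting identity $e^{t(-A)}=e^{(-t)A}$ on the level of the Borel o.c. (including the coincidence of domains); this is a routine change-of-variables in the operational calculus but should be spelled out, since everything downstream rests on reading off $f=g$ at $t=0$ and on the domain intersection $\bigcap_{t\ge0}D(e^{tA})\cap\bigcap_{t\le0}D(e^{tA})=\bigcap_{t\in\R}D(e^{tA})$. No estimates from the Preliminaries (the total variation bounds, Proposition \ref{prop}) are needed for this particular statement — they enter only in the later differentiability theorems — so the proof is short once the symmetry observation is in place.
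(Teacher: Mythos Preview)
Your proposal is correct and follows essentially the same route as the paper: split $y$ into $y_+$ and $y_-$ via the third bullet of Remarks \ref{remsws}, apply Theorem \ref{GWS+} to each half, and match at $t=0$. The paper's proof is more compressed---it does not spell out that $-A$ is scalar type spectral with $e^{t(-A)}=e^{(-t)A}$ or the $f=g$ matching explicitly---so your version simply makes explicit the details the paper leaves to the reader.
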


\begin{proof}\quad 
As is noted in the Introduction, $y:\R\to X$ is a weak solution of \eqref{1} \textit{iff} 
\[
y_+(t):=y(t),\ t\ge 0,
\]
is a weak solution of equation \eqref{+} and 
\[
y_-(t):=y(-t),\ t\ge 0,
\]
is a weak solution of equation \eqref{-}.

Applying Theorem \ref{GWS+}, to $y_+(\cdot)$ and $y_-(\cdot)$, we infer that, this is  equivalent to the fact
\[
y(t)=e^{tA}f,\ t\in\R,\ \text{with some}\
f \in \bigcap_{t\in \R}D(e^{tA}).
\]
\end{proof}

\begin{rems}\
\begin{itemize}
\item More generally, Theorem \ref{GWS+} and its proof can be easily modified to describe in the same manner all weak solution of equation \eqref{2} for an arbitrary interval $I$ of the real axis
$\R$.
\item Theorem \ref{GWS} implies, in particular,
\begin{itemize}
\item that the subspace $\bigcap_{t\in\R}D(e^{tA})$ of all possible initial values of the weak solutions of equation \eqref{1} is the largest permissible for the exponential form given by \eqref{expf}, which highlights the naturalness of the notion of weak solution, and
\item that associated \textit{ACP} \eqref{ACP}, whenever solvable,  is solvable \textit{uniquely}.
\end{itemize}
\item Observe that the initial-value subspace $\bigcap_{t\in\R}D(e^{tA})$ of equation \eqref{1}, containing the dense in $X$ subspace $\bigcup_{\alpha>0}E_A(\Delta_\alpha)X$, where
\begin{equation*}
\Delta_\alpha:=\left\{\lambda\in\C\,\middle|\,|\lambda|\le \alpha \right\},\ \alpha>0,
\end{equation*}
which coincides with the class ${\mathscr E}^{\{0\}}(A)$ of \textit{entire} vectors of $A$ of \textit{exponential type} \cite{Markin2015}, is \textit{dense} in $X$ as well.
\item When a scalar type spectral operator $A$ in a complex Banach space generates a strongly continuous group $\left\{T(t) \right\}_{t\in \R}$ of bounded linear operators, 
\[
T(t)=e^{tA}\ \text{and}\ D(e^{tA})=X,\ t\in \R,
\]
\cite{Markin2002(2)}, and hence, Theorem \ref{GWS} is consistent with the well-known description of the weak solutions for this setup (see \eqref{group}).
\item Clearly, the initial-value subspace $\bigcap_{t\in\R}D(e^{tA})$ of equation \eqref{1} is narrower than the initial-value subspace $\bigcap_{t\ge 0}D(e^{tA})$ of equation \eqref{+} and the initial-value subspace $\bigcap_{t\ge 0}D(e^{t(-A)})=\bigcap_{t\le 0}D(e^{tA})$ of equation \eqref{-}, in fact it is the intersection of the latter two.
\end{itemize} 
\end{rems} 

\section{Differentiability 
of a Particular Weak Solution}

Here, we characterize a particular weak solution's of equation \eqref{1} with a scalar type spectral operator $A$ in a complex Banach space being strongly differentiable on a subinterval $I$ of $\R$.

\begin{prop}[Differentiability of a Particular Weak Solution]\label{particular}\ \\
Let $n\in\N$ and $I$ be a subinterval of $\R$. A weak solution $y(\cdot)$ of equation \eqref{1} is $n$ times strongly differentiable on $I$ iff
\begin{equation*}
y(t) \in D(A^n),\ t\in I,
\end{equation*}
in which case, 
\begin{equation*}
y^{(k)}(t)=A^ky(t),\ k=1,\dots,n,t\in I.
\end{equation*}
\end{prop}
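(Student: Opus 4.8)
The plan is to reduce the two-sided statement to the one-sided result already available as Proposition \ref{Prop}, exploiting the ``folding'' correspondence between weak solutions of \eqref{1} on $\R$ and pairs of weak solutions of \eqref{+} and \eqref{-} on $[0,\infty)$, recorded in the Remarks following Definition \ref{ws} and used in the proof of Theorem \ref{GWS}. The key observation is that the condition $y(t)\in D(A^n)$ for all $t\in I$ is insensitive to the sign flip $A\mapsto -A$, since $D((-A)^n)=D(A^n)$, and likewise $A^n y(t)$ and $(-A)^n y(t)$ differ only by the sign $(-1)^n$, which will be tracked through the chain rule.

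First I would fix a subinterval $I$ of $\R$ and a weak solution $y(\cdot)$ of \eqref{1}. Write $I_+:=I\cap[0,\infty)$ and $I_-:=\{-t\,:\,t\in I,\ t\le 0\}\subseteq[0,\infty)$, and set $y_+(t):=y(t)$ for $t\ge 0$ and $y_-(t):=y(-t)$ for $t\ge 0$. By the cited Remark, $y_+$ is a weak solution of \eqref{+} and $y_-$ is a weak solution of \eqref{-}. Since $-A$ is again a scalar type spectral operator (its spectral measure is $E_A(\cdot)$ pushed forward under $\lambda\mapsto-\lambda$), Proposition \ref{Prop} applies to both: $y_+$ is $n$ times strongly differentiable on $I_+$ iff $y_+(t)\in D(A^n)$ for all $t\in I_+$, with $y_+^{(k)}(t)=A^k y_+(t)$; and $y_-$ is $n$ times strongly differentiable on $I_-$ iff $y_-(s)\in D((-A)^n)=D(A^n)$ for all $s\in I_-$, with $y_-^{(k)}(s)=(-A)^k y_-(s)$.

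Next I would assemble these two one-sided facts into the two-sided conclusion. Strong $n$-fold differentiability of $y$ on $I$ is equivalent to strong $n$-fold differentiability of $y_+$ on $I_+$ \emph{and} of $y_-$ on $I_-$, provided one also checks that the one-sided derivatives match at $t=0$ when $0$ is an interior point of $I$; here the chain rule gives $y_-^{(k)}(s)=(-1)^k y^{(k)}(-s)$ wherever the latter exists, so at $s=0$ the matching condition reads $y^{(k)}(0^-)=(-1)^k\cdot(-1)^k y^{(k)}(0^+)=y^{(k)}(0^+)$, i.e.\ it is automatic once both one-sided $k$-th derivatives exist. Combining, $y$ is $n$ times strongly differentiable on $I$ iff $y(t)\in D(A^n)$ for all $t\in I$ (the two domain conditions $y_+(t)\in D(A^n)$ on $I_+$ and $y_-(s)=y(-s)\in D(A^n)$ on $I_-$ together say exactly $y(t)\in D(A^n)$ for all $t\in I$), and in that case, for $t\ge0$ in $I$, $y^{(k)}(t)=y_+^{(k)}(t)=A^k y(t)$, while for $t\le0$ in $I$, $y^{(k)}(t)=(-1)^k y_-^{(k)}(-t)=(-1)^k(-A)^k y(t)=A^k y(t)$, as claimed.

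The only genuinely delicate point is the behavior at the seam $t=0$: one must confirm that gluing a function differentiable on $I_+$ and a function differentiable on $I_-$ yields a function differentiable on all of $I$, which is the standard fact that a function with matching one-sided derivatives at an interior point is differentiable there, applied inductively for $k=1,\dots,n$. Everything else is a mechanical transcription of Proposition \ref{Prop} through the substitution $t\mapsto-t$ and the identity $D((-A)^n)=D(A^n)$; I do not anticipate any real obstacle beyond bookkeeping the sign $(-1)^k$.
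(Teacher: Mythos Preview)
Your proposal is correct and follows essentially the same approach as the paper: fold the two-sided weak solution into $y_+(t)=y(t)$ and $y_-(t)=y(-t)$ on $[0,\infty)$ and apply Proposition~\ref{Prop} to each piece. The paper's proof is in fact a one-sentence sketch of exactly this reduction; you have simply supplied the details (that $-A$ is again scalar type spectral, the sign bookkeeping via the chain rule, and the matching of one-sided derivatives at the seam $t=0$) that the paper leaves implicit.
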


\begin{proof}\quad
The statement immediately follows from the prior theorem and Proposition \ref{Prop} applied to
\[
y_+(t):=y(t)\ \text{and}\ y_-(t):=y(-t),\ t\ge 0,
\] 
for an arbitrary weak solution $y(\cdot)$ of equation \eqref{1}.
\end{proof}

\begin{rem}
Observe that, as well as for Proposition \ref{Prop}, for $n=1$, the subinterval $I$ can degenerate into a singleton.
\end{rem}

Inductively, we immediately obtain the following
analog of {\cite[Corollary $3.2$]{Markin2011}}: 

\begin{cor}[Infinite Differentiability of a Particular Weak Solution]\label{Cor}\ \\
Let $A$ be a scalar type spectral operator in a complex Banach space $(X,\|\cdot\|)$ and $I$ be a subinterval of $\R$. A weak solution $y(\cdot)$ of equation \eqref{1} is strongly infinite differentiable on $I$ ($y(\cdot)\in C^\infty(I,X)$) iff, for each $t\in I$, 
\begin{equation*}
y(t) \in C^\infty(A):=\bigcap_{n=1}^{\infty}D(A^n),
\end{equation*}
in which case
\begin{equation*}
y^{(n)}(t)=A^ny(t),\ n\in \N,t\in I.
\end{equation*}
\end{cor}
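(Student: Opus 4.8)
The plan is to obtain Corollary \ref{Cor} as a straightforward induction built on Proposition \ref{particular}. The base case $n=1$ is exactly Proposition \ref{particular}: the weak solution $y(\cdot)$ is once strongly differentiable on $I$ with $y'(t)=Ay(t)$ iff $y(t)\in D(A)$ for all $t\in I$. For the inductive step I would suppose $y(\cdot)$ is $n$ times strongly differentiable on $I$, so that by Proposition \ref{particular} we have $y(t)\in D(A^n)$ for every $t\in I$ together with $y^{(k)}(t)=A^ky(t)$ for $k=1,\dots,n$; in particular $y^{(n)}(t)=A^ny(t)$. The key observation is that $z(\cdot):=y^{(n)}(\cdot)=A^ny(\cdot)$ is again a weak solution of equation \eqref{1}: by Theorem \ref{GWS}, $y(t)=e^{tA}f$ for some $f\in\bigcap_{t\in\R}D(e^{tA})$, and since $A^n$ commutes with $e^{tA}$ in the sense of the operational calculus, $A^ny(t)=e^{tA}(A^nf)$, and $A^nf$ again lies in $\bigcap_{t\in\R}D(e^{tA})$ because $\lambda^ne^{t\lambda}$ and $e^{t\lambda}$ generate the same integrability conditions via Proposition \ref{prop} (the factor $\lambda^n$ is subsumed into the exponential growth of $e^{(t\pm\varepsilon)\lambda}$ for any $\varepsilon>0$, so $f\in D(e^{(t\pm\varepsilon)A})$ forces $A^nf\in D(e^{tA})$). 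Hence $z(\cdot)$ is a weak solution, and applying Proposition \ref{particular} with $n=1$ to $z(\cdot)$ shows that $z(\cdot)$ is strongly differentiable on $I$ iff $z(t)=A^ny(t)\in D(A)$ for all $t\in I$, i.e. iff $y(t)\in D(A^{n+1})$, and in that case $z'(t)=Az(t)$, that is $y^{(n+1)}(t)=A^{n+1}y(t)$.

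Assembling the induction: $y(\cdot)$ is $(n+1)$ times strongly differentiable on $I$ iff it is $n$ times strongly differentiable and its $n$th derivative is once more differentiable, which by the above happens iff $y(t)\in D(A^{n+1})$ for all $t\in I$, with $y^{(n+1)}(t)=A^{n+1}y(t)$. Therefore, for every $n\in\N$, the weak solution $y(\cdot)$ is $n$ times strongly differentiable on $I$ iff $y(t)\in D(A^n)$ for all $t\in I$, with $y^{(k)}(t)=A^ky(t)$ for $k=1,\dots,n$. Intersecting over all $n\in\N$, $y(\cdot)\in C^\infty(I,X)$ iff $y(t)\in\bigcap_{n=1}^\infty D(A^n)=C^\infty(A)$ for each $t\in I$, and in that case $y^{(n)}(t)=A^ny(t)$ for all $n\in\N$ and $t\in I$, which is the assertion.

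The one point that requires a little care — and what I would flag as the main obstacle, though it is more a bookkeeping matter than a genuine difficulty — is the verification that $A^ny(\cdot)$ is itself a weak solution, or equivalently that $A^ky(t)\in D(A)$ whenever $y(t)\in D(A^{k+1})$ in a way compatible with differentiating the identity $y^{(k)}(t)=A^ky(t)$ term by term. This is where one must invoke the structure of the Borel operational calculus: the closedness of $A$, the commutativity $A^ke^{tA}\subseteq e^{tA}A^k$, and the nesting of domains $D(A^{n+1})\subseteq D(A^n)$ all enter, and one should note that differentiating $y^{(n)}(t)=A^ny(t)$ as a strongly continuous $X$-valued function and recognizing the derivative as $A^{n+1}y(t)$ uses precisely Proposition \ref{particular} applied to the weak solution $A^ny(\cdot)$ rather than any new computation. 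Since the excerpt explicitly labels the corollary as following "inductively" and "immediately" from Proposition \ref{particular}, I would keep this verification brief, citing the standard properties of the s.m. and o.c. alluded to in the Preliminaries, and present the argument as the routine induction sketched above.
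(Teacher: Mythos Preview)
Your overall structure arrives at the correct statement, but you are working much harder than the paper does, and in the process you introduce a step that does not hold as written.

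The paper's argument is essentially one sentence: Proposition~\ref{particular} is already stated for \emph{every} $n\in\N$, so the corollary follows by intersecting over $n$. That is, $y(\cdot)\in C^\infty(I,X)$ means precisely that $y(\cdot)$ is $n$ times strongly differentiable on $I$ for each $n$, which by Proposition~\ref{particular} is equivalent to $y(t)\in D(A^n)$ for each $n$ and each $t\in I$, i.e., $y(t)\in C^\infty(A)$ for each $t\in I$; the formula $y^{(n)}(t)=A^ny(t)$ comes along for free. No separate induction is needed here---whatever inductive content there is lives inside the proof of Proposition~\ref{particular} itself (which the paper reduces to Proposition~\ref{Prop} from \cite{Markin2011}).

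Your approach instead re-derives the general-$n$ case of Proposition~\ref{particular} from its $n=1$ case, and to do so you assert that $A^nf\in\bigcap_{t\in\R}D(e^{tA})$ whenever $f\in\bigcap_{t\in\R}D(e^{tA})$, justified by ``the factor $\lambda^n$ is subsumed into the exponential growth of $e^{(t\pm\varepsilon)\lambda}$.'' This is false in general: the needed estimate $|\lambda|^n\le C\,e^{\varepsilon|\Rep\lambda|}$ fails whenever $|\Imp\lambda|$ is large while $\Rep\lambda$ stays bounded. Concretely, if $\sigma(A)\subseteq i\R$ (e.g., $A$ is $i$ times an unbounded self-adjoint operator), then every $e^{tA}$ is bounded, so $\bigcap_{t\in\R}D(e^{tA})=X$, yet $D(A^n)$ is a proper subspace---one cannot even form $A^nf$ for a generic $f$, let alone place it back in the intersection. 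Hence your ``key observation'' that $z(\cdot)=A^ny(\cdot)$ extends to a weak solution of \eqref{1} on all of $\R$ is unsupported. The remedy is simply to invoke Proposition~\ref{particular} at level $n$ directly, as the paper does, and then intersect.
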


\section{Infinite Differentiability of Weak Solutions}

In this section, we characterize the strong infinite differentiability on $\R$ of all weak solutions of equation \eqref{1} with a scalar type spectral operator $A$ in a complex Banach space.

\begin{thm}[Infinite Differentiability of Weak Solutions]\label{real}\ \\
Let $A$ be a scalar type spectral operator in a complex Banach space $(X,\|\cdot\|)$ with spectral measure $E_A(\cdot)$. Every weak solution of equation \eqref{1} is strongly infinite differentiable on $\R$ iff there exist $b_+>0$ and $ b_->0$ such that the set $\sigma(A)\setminus {\mathscr L}_{b_-,b_+}$,
where
\begin{equation*}
{\mathscr L}_{b_-,b_+}:=\left\{\lambda \in \C\, \middle|\,
\Rep\lambda \le \min\left(0,-b_-\ln|\Imp\lambda|\right) 
\ \text{or}\ 
\Rep\lambda \ge \max\left(0,b_+\ln|\Imp\lambda|\right)\right\},
\end{equation*}
is bounded 
(see Fig. \ref{fig:graph2}).

\begin{figure}[h]
\centering
\includegraphics[height=2in]{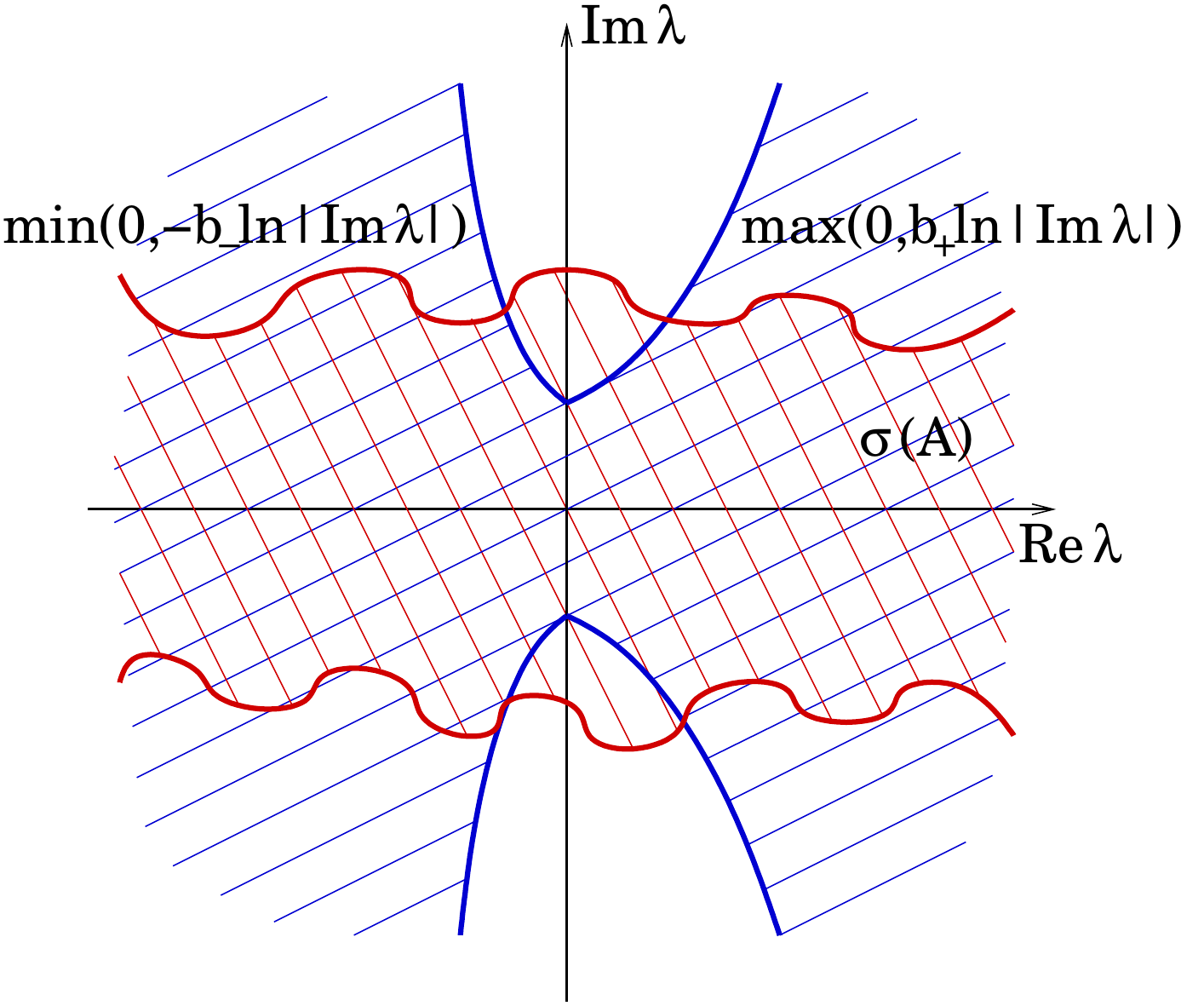}
\caption[]{}
\label{fig:graph2}
\end{figure}
\end{thm}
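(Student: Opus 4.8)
The plan is to reduce the bilateral problem on $\R$ to the two unilateral problems on $[0,\infty)$ studied in \cite{Markin2011}, and then to combine the two resulting spectral conditions. By the third item of Remarks \ref{remsws}, a vector function $y:\R\to X$ is a weak solution of \eqref{1} iff $y_+(t):=y(t)$, $t\ge 0$, is a weak solution of \eqref{+} and $y_-(t):=y(-t)$, $t\ge 0$, is a weak solution of \eqref{-}. Since $-A$ is again a scalar type spectral operator, with spectral measure $E_{-A}(\delta)=E_A(-\delta)$ and spectrum $\sigma(-A)=-\sigma(A)$, the known characterization (the counterpart of Theorem \ref{real} for the half-line, {\cite[Theorem]{Markin2011}}) applies to each of \eqref{+} and \eqref{-}. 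Thus ``every weak solution of \eqref{1} is strongly infinitely differentiable on $\R$'' should be equivalent to ``every weak solution of \eqref{+} is strongly infinitely differentiable on $[0,\infty)$ \emph{and} every weak solution of \eqref{-} is strongly infinitely differentiable on $[0,\infty)$.'' Care is needed here: a weak solution of \eqref{1} splits into a \emph{pair} $(y_+,y_-)$, but these share the initial value $y(0)=f\in\bigcap_{t\in\R}D(e^{tA})$, which is \emph{smaller} than $\bigcap_{t\ge0}D(e^{tA})$; so the forward implication (diff.\ on $\R$ $\Rightarrow$ the two half-line conditions) requires showing that the half-line initial-value subspaces are not ``too large,'' i.e.\ that one can still produce enough weak solutions of \eqref{+} alone to force the spectral condition. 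This is precisely where I expect the main obstacle to lie.

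To handle that obstacle, I would argue as in \cite{Markin2011}: the half-line spectral condition for \eqref{+} is forced already by testing against the weak solutions with initial values in $E_A(\Delta)X$ for bounded Borel sets $\Delta$, or more generally in $\bigcup_{\alpha>0}E_A(\Delta_\alpha)X$, which lies inside $\bigcap_{t\in\R}D(e^{tA})$. Hence every half-line weak solution that matters for the negative result is in fact the restriction to $[0,\infty)$ of a genuine weak solution of \eqref{1}; the same works for \eqref{-}. This closes the equivalence without enlarging the class of initial values beyond $\bigcap_{t\in\R}D(e^{tA})$.

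It then remains to combine the two half-line spectral conditions. Applying the half-line theorem to \eqref{+} yields: every weak solution of \eqref{+} is strongly infinitely differentiable on $[0,\infty)$ iff there is $b_+>0$ with $\sigma(A)\setminus\{\lambda\mid \Rep\lambda\ge\max(0,b_+\ln|\Imp\lambda|)\}$ bounded. Applying it to \eqref{-}, whose operator is $-A$ with spectrum $-\sigma(A)$, and substituting $\lambda\mapsto-\lambda$, yields: every weak solution of \eqref{-} is strongly infinitely differentiable on $[0,\infty)$ iff there is $b_->0$ with $\sigma(A)\setminus\{\lambda\mid \Rep\lambda\le\min(0,-b_-\ln|\Imp\lambda|)\}$ bounded (using $|\Imp(-\lambda)|=|\Imp\lambda|$). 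Since a union of two sets is bounded iff each is, and
\[
\sigma(A)\setminus{\mathscr L}_{b_-,b_+}
=\bigl(\sigma(A)\setminus\{\Rep\lambda\le\min(0,-b_-\ln|\Imp\lambda|)\}\bigr)
\cap\bigl(\sigma(A)\setminus\{\Rep\lambda\ge\max(0,b_+\ln|\Imp\lambda|)\}\bigr),
\]
the conjunction of the two half-line conditions is equivalent to the boundedness of $\sigma(A)\setminus{\mathscr L}_{b_-,b_+}$ for suitable $b_\pm>0$, which is the assertion. The only genuinely delicate point throughout is the bookkeeping in the first paragraph — making sure the ``diff.\ at every point of $\R$'' hypothesis really does feed both half-line arguments — and I would expect to dispatch it via Corollary \ref{Cor} (which characterizes infinite differentiability of a \emph{particular} weak solution by $y(t)\in C^\infty(A)$ for all $t$) together with Theorem \ref{GWS}.
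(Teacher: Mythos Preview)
Your reduction fails at two points. First, the forward implication ``every weak solution of \eqref{1} is $C^\infty$ on $\R$'' $\Rightarrow$ ``every weak solution of \eqref{+} is $C^\infty$ on $[0,\infty)$'' is simply false: take $A$ negative self-adjoint in a Hilbert space, so $\sigma(A)\subset(-\infty,0]\subset\mathscr{L}_{b_-,b_+}$ and the theorem's condition holds; yet $\bigcap_{t\ge0}D(e^{tA})=X$, so any $f\notin D(A)$ gives a weak solution of \eqref{+} not differentiable at $0$. Your proposed remedy --- that the half-line spectral condition is already forced by initial data in $\bigcup_\alpha E_A(\Delta_\alpha)X$ --- is backward: those are entire vectors of exponential type, so the corresponding solutions are \emph{always} $C^\infty$ and impose no spectral constraint whatsoever; the counterexamples in \cite{Markin2011} are supported on \emph{unbounded} unions of disks and do not lie in that class. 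Second, your set algebra at the end is wrong: you correctly display $\sigma(A)\setminus\mathscr{L}_{b_-,b_+}$ as an \emph{intersection} of two sets but then invoke a statement about unions; boundedness of the intersection is strictly weaker than boundedness of each factor (same self-adjoint example: $\sigma(A)\setminus R_+$ is all of $(-\infty,0]$, unbounded, while the intersection with $\sigma(A)\setminus R_-=\emptyset$ is empty). Thus the conjunction of the two half-line conditions is strictly stronger than the theorem's condition, not equivalent to it.

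The paper does not reduce to \cite{Markin2011} but argues directly. For the ``if'' part it verifies $e^{tA}f\in D(A^m)$ via Proposition~\ref{prop}, splitting $\sigma(A)$ into the bounded exceptional piece, the bounded strip $\{-1<\Rep\lambda<1\}\cap\mathscr{L}$, and the two unbounded arms of $\mathscr{L}$, and on each arm uses that $f\in\bigcap_{t\in\R}D(e^{tA})$ controls exponentials of the \emph{appropriate sign}. For the ``only if'' part it constructs $f=\sum c_k e_k$ supported near a sequence $\lambda_n\in\sigma(A)$ violating every $\mathscr{L}_{(2n)^{-1},(2n)^{-1}}$, and then checks \emph{separately} that $f\in D(e^{tA})$ for $t\ge0$ and for $t<0$, via a case split on whether $\{\Rep\lambda_n\}$ is bounded, and if not whether a subsequence tends to $+\infty$ or $-\infty$; this two-sided domain verification is precisely what accommodates the weaker (union-of-two-log-regions) spectral condition and is the content that cannot be short-circuited by citing the half-line result.
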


\begin{proof}

\textit{``If"} part.\quad Suppose that there exist
$b_+>0$ and $ b_->0$ such that the set $\sigma(A)\setminus {\mathscr L}_{b_-,b_+}$
is \textit{bounded} and let $y(\cdot)$ be an arbitrary weak solution of equation \eqref{1}. 

By Theorem \ref{GWS}, 
\begin{equation*}
y(t)=e^{tA}f,\ t\in \R,\ \text{with some}\
f \in \bigcap_{t\in \R}D(e^{tA}).
\end{equation*}

Our purpose is to show that $y(\cdot)\in C^\infty\left(\R,X\right)$, which, by Corollary \ref{Cor}, is attained by showing that, for each $t\in\R$,
\[
y(t)\in C^\infty(A):=\bigcap_{n=1}^\infty D(A^n).
\]

Let us proceed by proving that, for any $t\in\R$ and $m\in\N$
\[
y(t)\in D(A^m)
\] 
via Proposition \ref{prop}.

For any $t\in\R$, $m\in\N$ and an arbitrary $g^*\in X^*$,
\begin{multline}\label{first}
\int\limits_{\sigma(A)}|\lambda|^me^{t\Rep\lambda}\,dv(f,g^*,\lambda)
=\int\limits_{\sigma(A)\setminus{\mathscr L}_{b_-,b_+}}|\lambda|^me^{t\Rep\lambda}\,dv(f,g^*,\lambda)
\\
\shoveleft{
+\int\limits_{\left\{\lambda\in \sigma(A)\cap{\mathscr L}_{b_-,b_+}\,\middle|\,-1<\Rep\lambda<1 \right\}}|\lambda|^me^{t\Rep\lambda}\,dv(f,g^*,\lambda)
}\\
\shoveleft{
+\int\limits_{\left\{\lambda\in \sigma(A)\cap{\mathscr L}_{b_-,b_+}\,\middle|\,\Rep\lambda\ge 1 \right\}}|\lambda|^me^{t\Rep\lambda}\,dv(f,g^*,\lambda)
}\\
\hspace{1.2cm}
+\int\limits_{\left\{\lambda\in \sigma(A)\cap{\mathscr L}_{b_-,b_+}\,\middle|\,\Rep\lambda\le -1 \right\}}|\lambda|^me^{t\Rep\lambda}\,dv(f,g^*,\lambda)<\infty.
\hfill
\end{multline}

Indeed, 
\[
\int\limits_{\sigma(A)\setminus{\mathscr L}_{b_-,b_+}}|\lambda|^me^{t\Rep\lambda}\,dv(f,g^*,\lambda)<\infty
\]
and
\[
\int\limits_{\left\{\lambda\in \sigma(A)\cap{\mathscr L}_{b_-,b_+}\,\middle|\,-1<\Rep\lambda<1 \right\}}|\lambda|^me^{t\Rep\lambda}\,dv(f,g^*,\lambda)<\infty
\]
due to the boundedness of the sets
\[
\sigma(A)\setminus{\mathscr L}_{b_-,b_+}\ \text{and}\
\left\{\lambda\in \sigma(A)\cap{\mathscr L}_{b_-,b_+}\;\middle|\;-1<\Rep\lambda<1 \right\},
\]
the continuity of the integrated function on $\C$, and the finiteness of the measure $v(f,g^*,\cdot)$.

Further, for any $t\in\R$, $m\in\N$ and an arbitrary $g^*\in X^*$,
\begin{multline}\label{interm}
\int\limits_{\left\{\lambda\in \sigma(A)\cap{\mathscr L}_{b_-,b_+}\,\middle|\,\Rep\lambda\ge 1 \right\}}|\lambda|^me^{t\Rep\lambda}\,dv(f,g^*,\lambda)
\\
\shoveleft{
\le\int\limits_{\left\{\lambda\in \sigma(A)\cap{\mathscr L}_{b_-,b_+}\,\middle|\,\Rep\lambda\ge 1 \right\}}{\left[|\Rep\lambda|+|\Imp\lambda|\right]}^me^{t\Rep\lambda}\,dv(f,g^*,\lambda)
}\\
\hfill
\text{since, for $\lambda\in\sigma(A)\cap{\mathscr L}_{b_-,b_+}$ with $\Rep\lambda\ge 1$, $e^{b_+^{-1}\Rep\lambda}\ge |\Imp\lambda|$;}
\\
\shoveleft{
\le 
\int\limits_{\left\{\lambda\in \sigma(A)\cap{\mathscr L}_{b_-,b_+}\,\middle|\,\Rep\lambda\ge 1 \right\}}{\left[\Rep\lambda+e^{b_+^{-1}\Rep\lambda}\right]}^me^{t\Rep\lambda}\,dv(f,g^*,\lambda)
}\\
\hfill
\text{since, in view of $\Rep\lambda\ge 1$, $b_+e^{b_+^{-1}\Rep\lambda}\ge\Rep\lambda$;}
\\
\shoveleft{
\le 
\int\limits_{\left\{\lambda\in \sigma(A)\cap{\mathscr L}_{b_-,b_+}\,\middle|\,\Rep\lambda\ge 1 \right\}}{\left[b_+e^{b_+^{-1}\Rep\lambda}+e^{b_+^{-1}\Rep\lambda}\right]}^me^{t\Rep\lambda}\,dv(f,g^*,\lambda)
}\\
\shoveleft{
= [b_++1]^m\int\limits_{\left\{\lambda\in \sigma(A)\cap{\mathscr L}_{b_-,b_+}\,\middle|\,\Rep\lambda\ge 1 \right\}}e^{[mb_+^{-1}+t]\Rep\lambda}\,dv(f,g^*,\lambda)
}\\
\hfill
\text{since $f\in \bigcap\limits_{t\in\R}D(e^{tA})$, by Proposition \ref{prop};}
\\
\hspace{1.2cm}
<\infty. 
\hfill
\end{multline}

Finally, for any $t\in\R$, $m\in\N$ and an arbitrary $g^*\in X^*$,
\begin{multline}\label{interm2}
\int\limits_{\left\{\lambda\in \sigma(A)\cap{\mathscr L}_{b_-,b_+}\,\middle|\,\Rep\lambda\le -1 \right\}}|\lambda|^me^{t\Rep\lambda}\,dv(f,g^*,\lambda)
\\
\shoveleft{
\le\int\limits_{\left\{\lambda\in \sigma(A)\cap{\mathscr L}_{b_-,b_+}\,\middle|\,\Rep\lambda\le -1 \right\}}{\left[|\Rep\lambda|+|\Imp\lambda|\right]}^me^{t\Rep\lambda}\,dv(f,g^*,\lambda)
}\\
\hfill
\text{since, for $\lambda\in\sigma(A)\cap{\mathscr L}_{b_-,b_+}$ with $\Rep\lambda\le -1$, $e^{b_-^{-1}(-\Rep\lambda)}\ge |\Imp\lambda|$;}
\\
\shoveleft{
\le 
\int\limits_{\left\{\lambda\in \sigma(A)\cap{\mathscr L}_{b_-,b_+}\,\middle|\,\Rep\lambda\le -1 \right\}}{\left[-\Rep\lambda+e^{b_-^{-1}(-\Rep\lambda)}\right]}^me^{t\Rep\lambda}\,dv(f,g^*,\lambda)
}\\
\hfill
\text{since, in view of $-\Rep\lambda\ge 1$, $b_-e^{b_-^{-1}(-\Rep\lambda)}\ge-\Rep\lambda$;}
\\
\shoveleft{
\le 
\int\limits_{\left\{\lambda\in \sigma(A)\cap{\mathscr L}_{b_-,b_+}\,\middle|\,\Rep\lambda\le -1 \right\}}{\left[b_-e^{b_-^{-1}(-\Rep\lambda)}+e^{b_-^{-1}(-\Rep\lambda)}\right]}^me^{t\Rep\lambda}\,dv(f,g^*,\lambda)
}\\
\shoveleft{
= [b_-+1]^m\int\limits_{\left\{\lambda\in \sigma(A)\cap{\mathscr L}_{b_-,b_+}\,\middle|\,\Rep\lambda\le -1 \right\}}e^{[t-mb_-^{-1}]\Rep\lambda}\,dv(f,g^*,\lambda)
}\\
\hfill
\text{since $f\in \bigcap\limits_{t\in\R}D(e^{tA})$, by Proposition \ref{prop};}
\\
\hspace{1.2cm}
<\infty. 
\hfill
\end{multline}

Also, for any $t\in\R$, $m\in\N$ and an arbitrary $n\in\N$,
\begin{multline}\label{second}
\sup_{\{g^*\in X^*\,|\,\|g^*\|=1\}}
\int\limits_{\left\{\lambda\in\sigma(A)\,\middle|\,|\lambda|^me^{t\Rep\lambda}>n\right\}}
|\lambda|^me^{t\Rep\lambda}\,dv(f,g^*,\lambda)
\\
\shoveleft{
\le \sup_{\{g^*\in X^*\,|\,\|g^*\|=1\}}
\int\limits_{\left\{\lambda\in\sigma(A)
\setminus{\mathscr L}_{b_-,b_+}\,\middle|\,|\lambda|^me^{t\Rep\lambda}>n\right\}}|\lambda|^me^{t\Rep\lambda}\,dv(f,g^*,\lambda)
}\\
\shoveleft{
+ \sup_{\{g^*\in X^*\,|\,\|g^*\|=1\}}
\int\limits_{\left\{\lambda\in\sigma(A)\cap{\mathscr L}_{b_-,b_+}\,\middle|\,-1<\Rep\lambda<1,\, |\lambda|^me^{t\Rep\lambda}>n\right\}}|\lambda|^me^{t\Rep\lambda}\,dv(f,g^*,\lambda)
}\\
\shoveleft{
+ \sup_{\{g^*\in X^*\,|\,\|g^*\|=1\}}
\int\limits_{\left\{\lambda\in\sigma(A)\cap{\mathscr L}_{b_-,b_+}\,\middle|\,\Rep\lambda\ge 1,\, |\lambda|^me^{t\Rep\lambda}>n\right\}}|\lambda|^me^{t\Rep\lambda}\,dv(f,g^*,\lambda)
}\\
\shoveleft{
+ \sup_{\{g^*\in X^*\,|\,\|g^*\|=1\}}
\int\limits_{\left\{\lambda\in\sigma(A)\cap{\mathscr L}_{b_-,b_+}\,\middle|\,\Rep\lambda\le -1,\, |\lambda|^me^{t\Rep\lambda}>n\right\}}|\lambda|^me^{t\Rep\lambda}\,dv(f,g^*,\lambda)
}\\
\hspace{1.2cm}
\to 0,\ n\to\infty.
\hfill
\end{multline}

Indeed, since, due to the boundedness of the sets
\[
\sigma(A)\setminus{\mathscr L}_{b_-,b_+}\ \text{and}\
\left\{\lambda\in\sigma(A)\cap{\mathscr L}_{b_-,b_+}\,\middle|\,-1<\Rep\lambda<1\right\}
\]
and the continuity of the integrated function on $\C$,
the sets
\[
\left\{\lambda\in\sigma(A)\setminus{\mathscr L}_{b_-,b_+}\,\middle|\,|\lambda|^me^{t\Rep\lambda}>n\right\}
\]
and 
\[
\left\{\lambda\in\sigma(A)\cap{\mathscr L}_{b_-,b_+}\,\middle|\,-1<\Rep\lambda<1,\, |\lambda|^me^{t\Rep\lambda}>n\right\}
\]
are \textit{empty} for all sufficiently large $n\in \N$,
we immediately infer that, for any $t\in\R$ and $m\in\N$,
\[
\lim_{n\to\infty}\sup_{\{g^*\in X^*\,|\,\|g^*\|=1\}}
\int\limits_{\left\{\lambda\in\sigma(A)
\setminus{\mathscr L}_{b_-,b_+}\,\middle|\,|\lambda|^me^{t\Rep\lambda}>n\right\}}|\lambda|^me^{t\Rep\lambda}\,dv(f,g^*,\lambda)=0
\]
and
\[
\lim_{n\to\infty}\sup_{\{g^*\in X^*\,|\,\|g^*\|=1\}}
\int\limits_{\left\{\lambda\in\sigma(A)\cap{\mathscr L}_{b_-,b_+}\,\middle|\,-1<\Rep\lambda<1,\, |\lambda|^me^{t\Rep\lambda}>n\right\}}|\lambda|^me^{t\Rep\lambda}\,dv(f,g^*,\lambda)
=0.
\]

Further, for any $t\in\R$, $m\in\N$ and an arbitrary $n\in\N$,
\begin{multline*}
\sup_{\{g^*\in X^*\,|\,\|g^*\|=1\}}
\int\limits_{\left\{\lambda\in\sigma(A)\cap{\mathscr L}_{b_-,b_+}\,\middle|\,\Rep\lambda\ge 1,\, |\lambda|^me^{t\Rep\lambda}>n\right\}}|\lambda|^me^{t\Rep\lambda}\,dv(f,g^*,\lambda)
\\
\hfill
\text{as in \eqref{interm};}
\\
\shoveleft{
\le \sup_{\{g^*\in X^*\,|\,\|g^*\|=1\}}
[b_++1]^m\int\limits_{\left\{\lambda\in\sigma(A)\cap{\mathscr L}_{b_-,b_+}\,\middle|\,\Rep\lambda\ge 1,\, |\lambda|^me^{t\Rep\lambda}>n\right\}}e^{[mb_+^{-1}+t]\Rep\lambda}\,dv(f,g^*,\lambda)
}\\
\hfill
\text{since $f\in \bigcap\limits_{t\in\R}D(e^{tA})$, by \eqref{cond(ii)};}
\\
\shoveleft{
\le \sup_{\{g^*\in X^*\,|\,\|g^*\|=1\}}
}\\
\shoveleft{
[b_++1]^m4M\left\|E_A\left(\left\{\lambda\in\sigma(A)\cap{\mathscr L}_{b_-,b_+}\,\middle|\,\Rep\lambda\ge 1,\, |\lambda|^me^{t\Rep\lambda}>n\right\}\right)
e^{[mb_+^{-1}+t]A}f\right\|\|g^*\|
}\\
\shoveleft{
\le [b_++1]^m4M\left\|E_A\left(\left\{\lambda\in\sigma(A)\cap{\mathscr L}_{b_-,b_+}\,\middle|\,\Rep\lambda\ge 1,\, |\lambda|^me^{t\Rep\lambda}>n\right\}\right)
e^{[mb_+^{-1}+t]A}f\right\|
}\\
\hfill
\text{by the strong continuity of the {\it s.m.};}
\\
\ \
\to [b_++1]^m4M\left\|E_A\left(\emptyset\right)e^{[mb_+^{-1}+t]A}f\right\|=0,\ n\to\infty.
\hfill
\end{multline*}

Finally, for any $t\in\R$, $m\in\N$ and an arbitrary $n\in\N$,
\begin{multline*}
\sup_{\{g^*\in X^*\,|\,\|g^*\|=1\}}
\int\limits_{\left\{\lambda\in\sigma(A)\cap{\mathscr L}_{b_-,b_+}\,\middle|\,\Rep\lambda\le -1,\, |\lambda|^me^{t\Rep\lambda}>n\right\}}|\lambda|^me^{t\Rep\lambda}\,dv(f,g^*,\lambda)
\\
\hfill
\text{as in \eqref{interm2};}
\\
\shoveleft{
\le \sup_{\{g^*\in X^*\,|\,\|g^*\|=1\}}
[b_-+1]^m\int\limits_{\left\{\lambda\in\sigma(A)\cap{\mathscr L}_{b_-,b_+}\,\middle|\,\Rep\lambda\le -1,\, |\lambda|^me^{t\Rep\lambda}>n\right\}}e^{[t-mb_-^{-1}]\Rep\lambda}\,dv(f,g^*,\lambda)
}\\
\hfill
\text{since $f\in \bigcap\limits_{t\in\R}D(e^{tA})$, by \eqref{cond(ii)};}
\\
\shoveleft{
\le \sup_{\{g^*\in X^*\,|\,\|g^*\|=1\}}
}\\
\shoveleft{
[b_-+1]^m4M\left\|E_A\left(\left\{\lambda\in\sigma(A)\cap{\mathscr L}_{b_-,b_+}\,\middle|\,\Rep\lambda\le -1,\, |\lambda|^me^{t\Rep\lambda}>n\right\}\right)
e^{[t-mb_-^{-1}]A}f\right\|\|g^*\|
}\\
\shoveleft{
\le [b_-+1]^m4M\left\|E_A\left(\left\{\lambda\in\sigma(A)\cap{\mathscr L}_{b_-,b_+}\,\middle|\,\Rep\lambda\le -1,\, |\lambda|^me^{t\Rep\lambda}>n\right\}\right)
e^{[t-mb_-^{-1}]A}f\right\|
}\\
\hfill
\text{by the strong continuity of the {\it s.m.};}
\\
\ \
\to [b_++1]^m4M\left\|E_A\left(\emptyset\right)e^{[t-mb_-^{-1}]A}f\right\|=0,\ n\to\infty.
\hfill
\end{multline*}

By Proposition \ref{prop} and the properties of the \textit{o.c.} (see {\cite[Theorem XVIII.$2.11$ (f)]{Dun-SchIII}}), \eqref{first} and \eqref{second} jointly imply that, for any $t\in\R$ and $m\in\N$,
\[
f\in D(A^me^{tA}),
\]
which further implies that, for each $t\in\R$, 
\begin{equation*}
y(t)=e^{tA}f\in \bigcap_{n=1}^\infty D(A^n)
=:C^\infty(A).
\end{equation*}

Whence, by Corollary \ref{Cor}, we infer that
\begin{equation*}
y(\cdot) \in C^\infty\left(\R,X\right),
\end{equation*}
which completes the proof of the \textit{``if"} part.

\medskip
\textit{``Only if"} part.\quad Let us prove this part {\it by contrapositive} assuming that, for any $b_+>0$ and $b_->0$, the set 
$\sigma(A)\setminus {\mathscr L}_{b_-,b_+}$ is \textit{unbounded}. In particular, this means that, for any $n\in \N$, unbounded is the set
\begin{equation*}
\sigma(A)\setminus {\mathscr L}_{(2n)^{-1},(2n)^{-1}}=
\left\{\lambda \in \sigma(A)\,\middle| 
-(2n)^{-1}\ln|\Imp\lambda|<\Rep\lambda < (2n)^{-1}\ln|\Imp\lambda|\right\}.
\end{equation*} 

Hence, we can choose a sequence of points $\left\{\lambda_n\right\}_{n=1}^\infty$ 
in the complex plane as follows:
\begin{equation*}
\begin{split}
&\lambda_n \in \sigma(A),\ n\in \N,\\
&-(2n)^{-1}\ln|\Imp\lambda_n|<\Rep\lambda_n < (2n)^{-1}\ln|\Imp\lambda_n|,\ n\in \N,\\
&\lambda_0:=0,\ |\lambda_n|>\max\left[n^4,|\lambda_{n-1}|\right],\ n\in \N.\\
\end{split}
\end{equation*}

The latter implies, in particular, that the points $\lambda_n$, $n\in\N$, are \textit{distinct} ($\lambda_i \neq \lambda_j$, $i\neq j$).

Since, for each $n\in \N$, the set
\begin{equation*}
\left\{ \lambda \in {\mathbb C}\,\middle|\, 
-(2n)^{-1}\ln|\Imp\lambda|<\Rep\lambda < (2n)^{-1}\ln|\Imp\lambda|,\
|\lambda|>\max\bigl[n^4,|\lambda_{n-1}|\bigr]\right\}
\end{equation*}
is {\it open} in $\C$, along with the point $\lambda_n$, it contains an {\it open disk}
\begin{equation*}
\Delta_n:=\left\{\lambda \in \C\, \middle|\,|\lambda-\lambda_n|<\varepsilon_n \right\}
\end{equation*} 
centered at $\lambda_n$ of some radius $\varepsilon_n>0$, i.e., for each $\lambda \in \Delta_n$,
\begin{equation}\label{disks1}
-(2n)^{-1}\ln|\Imp\lambda|<\Rep\lambda < (2n)^{-1}\ln|\Imp\lambda|\ \text{and}\ |\lambda|>\max\bigl[n^4,|\lambda_{n-1}|\bigr].
\end{equation}

Furthermore, we can regard the radii of the disks to be small enough so that
\begin{equation}\label{radii1}
\begin{split}
&0<\varepsilon_n<\dfrac{1}{n},\ n\in\N,\ \text{and}\\
&\Delta_i \cap \Delta_j=\emptyset,\ i\neq j
\quad \text{(i.e., the disks are {\it pairwise disjoint})}.
\end{split}
\end{equation}

Whence, by the properties of the {\it s.m.}, 
\begin{equation*}
E_A(\Delta_i)E_A(\Delta_j)=0,\ i\neq j,
\end{equation*}
where $0$ stands for the \textit{zero operator} on $X$.

Observe also, that the subspaces $E_A(\Delta_n)X$, $n\in \N$, are \textit{nontrivial} since
\[
\Delta_n \cap \sigma(A)\neq \emptyset,\ n\in\N,
\]
with $\Delta_n$ being an \textit{open set} in $\C$. 

By choosing a unit vector $e_n\in E_A(\Delta_n)X$ for each $n\in\N$, we obtain a sequence 
$\left\{e_n\right\}_{n=1}^\infty$ in $X$ such that
\begin{equation}\label{ortho1}
\|e_n\|=1,\ n\in\N,\ \text{and}\ E_A(\Delta_i)e_j=\delta_{ij}e_j,\ i,j\in\N,
\end{equation}
where $\delta_{ij}$ is the \textit{Kronecker delta}.

As is easily seen, \eqref{ortho1} implies that the vectors $e_n$, $n\in \N$, are \textit{linearly independent}.

Furthermore, there is an $\varepsilon>0$ such that
\begin{equation}\label{dist1}
d_n:=\dist\left(e_n,\spa\left(\left\{e_i\,|\,i\in\N,\ i\neq n\right\}\right)\right)\ge\varepsilon,\ n\in\N.
\end{equation}

Indeed, the opposite implies the existence of a subsequence $\left\{d_{n(k)}\right\}_{k=1}^\infty$ such that
\begin{equation*}
d_{n(k)}\to 0,\ k\to\infty.
\end{equation*}

Then, by selecting a vector
\[
f_{n(k)}\in 
\spa\left(\left\{e_i\,|\,i\in\N,\ i\neq n(k)\right\}\right),\ k\in\N,
\] 
such that 
\[
\|e_{n(k)}-f_{n(k)}\|<d_{n(k)}+1/k,\ k\in\N,
\]
we arrive at
\begin{multline*}
1=\|e_{n(k)}\|
\hfill
\text{since, by \eqref{ortho1}, 
$E_A(\Delta_{n(k)})f_{n(k)}=0$;}
\\
\shoveleft{
=\|E_A(\Delta_{n(k)})(e_{n(k)}-f_{n(k)})\|\
\le \|E_A(\Delta_{n(k)})\|\|e_{n(k)}-f_{n(k)}\|
\hfill
\text{by \eqref{bounded};}
}\\
\ \
\le M\|e_{n(k)}-f_{n(k)}\|\le M\left[d_{n(k)}+1/k\right]
\to 0,\ k\to\infty,
\hfill
\end{multline*}
which is a \textit{contradiction} proving \eqref{dist1}. 

As follows from the {\it Hahn-Banach Theorem}, for any $n\in\N$, there is an $e^*_n\in X^*$ such that 
\begin{equation}\label{H-B1}
\|e_n^*\|=1,\ n\in\N,\ \text{and}\ \langle e_i,e_j^*\rangle=\delta_{ij}d_i,\ i,j\in\N.
\end{equation}

Let us consider separately the two possibilities concerning the sequence of the real parts $\{\Rep\lambda_n\}_{n=1}^\infty$: its being \textit{bounded} or \textit{unbounded}. 

First, suppose that the sequence $\{\Rep\lambda_n\}_{n=1}^\infty$ is \textit{bounded}, i.e., there is such an $\omega>0$ that
\begin{equation}\label{bounded1}
|\Rep\lambda_n| \le \omega,\ n\in\N,
\end{equation}
and consider the element
\begin{equation*}
f:=\sum_{k=1}^\infty k^{-2}e_k\in X,
\end{equation*}
which is well defined since $\left\{k^{-2}\right\}_{k=1}^\infty\in l_1$ ($l_1$ is the space of absolutely summable sequences) and $\|e_k\|=1$, $k\in\N$ (see \eqref{ortho1}).

In view of \eqref{ortho1}, by the properties of the \textit{s.m.},
\begin{equation}\label{vectors1}
E_A(\cup_{k=1}^\infty\Delta_k)f=f\ \text{and}\ E_A(\Delta_k)f=k^{-2}e_k,\ k\in\N.
\end{equation}

For any $t\ge 0$ and an arbitrary $g^*\in X^*$,
\begin{multline}\label{first1}
\int\limits_{\sigma(A)}e^{t\Rep\lambda}\,dv(f,g^*,\lambda)
\hfill \text{by \eqref{vectors1};}
\\
\shoveleft{
=\int\limits_{\sigma(A)} e^{t\Rep\lambda}\,d v(E_A(\cup_{k=1}^\infty \Delta_k)f,g^*,\lambda)
\hfill
\text{by \eqref{decompose};}
}\\
\shoveleft{
=\sum_{k=1}^\infty\int\limits_{\sigma(A)\cap\Delta_k}e^{t\Rep\lambda}\,dv(E_A(\Delta_k)f,g^*,\lambda)
\hfill 
\text{by \eqref{vectors1};}
}\\
\shoveleft{
=\sum_{k=1}^\infty k^{-2}\int\limits_{\sigma(A)\cap\Delta_k}e^{t\Rep\lambda}\,dv(e_k,g^*,\lambda)
}\\
\hfill
\text{since, for $\lambda\in \Delta_k$, by \eqref{bounded1} and \eqref{radii1},}\ 
\Rep\lambda=\Rep\lambda_k+(\Rep\lambda-\Rep\lambda_k)
\\
\hfill
\le \Rep\lambda_k+|\lambda-\lambda_k|\le \omega+\varepsilon_k\le \omega+1;
\\
\shoveleft{
\le e^{t(\omega+1)}\sum_{k=1}^\infty k^{-2}\int\limits_{\sigma(A)\cap\Delta_k}1\,dv(e_k,g^*,\lambda)
= e^{t(\omega+1)}\sum_{k=1}^\infty k^{-2}v(e_k,g^*,\Delta_k)
}\\
\hfill
\text{by \eqref{tv};}
\\
\hspace{1.2cm}
\le e^{t(\omega+1)}\sum_{k=1}^\infty k^{-2}4M\|e_k\|\|g^*\|
= 4Me^{t(\omega+1)}\|g^*\|\sum_{k=1}^\infty k^{-2}<\infty.
\hfill
\end{multline} 

Also, for any $t<0$ and an arbitrary $g^*\in X^*$,
\begin{multline}\label{ffirst1}
\int\limits_{\sigma(A)}e^{t\Rep\lambda}\,dv(f,g^*,\lambda)
\hfill \text{by \eqref{vectors1};}
\\
\shoveleft{
=\int\limits_{\sigma(A)} e^{t\Rep\lambda}\,d v(E_A(\cup_{k=1}^\infty \Delta_k)f,g^*,\lambda)
\hfill
\text{by \eqref{decompose};}
}\\
\shoveleft{
=\sum_{k=1}^\infty\int\limits_{\sigma(A)\cap\Delta_k}e^{t\Rep\lambda}\,dv(E_A(\Delta_k)f,g^*,\lambda)
\hfill 
\text{by \eqref{vectors1};}
}\\
\shoveleft{
=\sum_{k=1}^\infty k^{-2}\int\limits_{\sigma(A)\cap\Delta_k}e^{t\Rep\lambda}\,dv(e_k,g^*,\lambda)
}\\
\hfill
\text{since, for $\lambda\in \Delta_k$, by \eqref{bounded1} and \eqref{radii1},}\ 
\Rep\lambda=\Rep\lambda_k-(\Rep\lambda_k-\Rep\lambda)
\\
\hfill
\ge \Rep\lambda_k-|\Rep\lambda_k-\Rep\lambda|\ge -\omega-\varepsilon_k\ge -\omega-1;
\\
\shoveleft{
\le e^{-t(\omega+1)}\sum_{k=1}^\infty k^{-2}\int\limits_{\sigma(A)\cap\Delta_k}1\,dv(e_k,g^*,\lambda)
= e^{-t(\omega+1)}\sum_{k=1}^\infty k^{-2}v(e_k,g^*,\Delta_k)
}\\
\hfill
\text{by \eqref{tv};}
\\
\hspace{1.2cm}
\le e^{-t(\omega+1)}\sum_{k=1}^\infty k^{-2}4M\|e_k\|\|g^*\|
= 4Me^{-t(\omega+1)}\|g^*\|\sum_{k=1}^\infty k^{-2}<\infty.
\hfill
\end{multline} 

Similarly, to \eqref{first1} for any $t\ge 0$ and an arbitrary $n\in\N$,
\begin{multline}\label{second1}
\sup_{\{g^*\in X^*\,|\,\|g^*\|=1\}}
\int\limits_{\left\{\lambda\in\sigma(A)\,\middle|\,e^{t\Rep\lambda}>n\right\}} 
e^{t\Rep\lambda}\,dv(f,g^*,\lambda)
\\
\shoveleft{
\le 
\sup_{\{g^*\in X^*\,|\,\|g^*\|=1\}}e^{t(\omega+1)}\sum_{k=1}^\infty k^{-2}
\int\limits_{\left\{\lambda\in\sigma(A)\,\middle|\,e^{t\Rep\lambda}>n\right\}\cap \Delta_k}1\,dv(e_k,g^*,\lambda) 
}\\
\hfill \text{by \eqref{vectors1};}
\\
\shoveleft{
=e^{t(\omega+1)}\sup_{\{g^*\in X^*\,|\,\|g^*\|=1\}}\sum_{k=1}^\infty 
\int\limits_{\left\{\lambda\in\sigma(A)\,\middle|\,e^{t\Rep\lambda}>n\right\}\cap \Delta_k}1\,dv(E_A(\Delta_k)f,g^*,\lambda) 
}\\
\hfill \text{by \eqref{decompose};}
\\
\shoveleft{
= e^{t(\omega+1)}\sup_{\{g^*\in X^*\,|\,\|g^*\|=1\}}
\int\limits_{\{\lambda\in\sigma(A)\,|\,e^{t\Rep\lambda}>n\}}1\,dv(E_A(\cup_{k=1}^\infty\Delta_k)f,g^*,\lambda)
}\\
\hfill \text{by \eqref{vectors1};}
\\
\shoveleft{
= e^{t(\omega+1)}\sup_{\{g^*\in X^*\,|\,\|g^*\|=1\}}
\int\limits_{\{\lambda\in\sigma(A)\,|\,e^{t\Rep\lambda}>n\}}1\,dv(f,g^*,\lambda)
\hfill
\text{by \eqref{cond(ii)};}
}\\
\shoveleft{
\le e^{t(\omega+1)}\sup_{\{g^*\in X^*\,|\,\|g^*\|=1\}}4M\left\|E_A\left(\left\{\lambda\in\sigma(A)\,\middle|\,e^{t\Rep\lambda}>n\right\}\right)f\right\|\|g^*\|
}\\
\shoveleft{
\le 4Me^{t(\omega+1)}\left\|E_A\left(\left\{\lambda\in\sigma(A)\,\middle|\,e^{t\Rep\lambda}>n\right\}\right)f\right\|
}\\
\hfill
\text{by the strong continuity of the {\it s.m.};}
\\
\hspace{1.2cm}
\to 4Me^{t(\omega+1)}\left\|E_A\left(\emptyset\right)f\right\|=0,\ n\to\infty.
\hfill
\end{multline}

Similarly, to \eqref{ffirst1} for any $t<0$ and an arbitrary $n\in\N$,
\begin{multline}\label{ssecond1}
\sup_{\{g^*\in X^*\,|\,\|g^*\|=1\}}
\int\limits_{\left\{\lambda\in\sigma(A)\,\middle|\,e^{t\Rep\lambda}>n\right\}} 
e^{t\Rep\lambda}\,dv(f,g^*,\lambda)
\\
\shoveleft{
\le 
\sup_{\{g^*\in X^*\,|\,\|g^*\|=1\}}e^{-t(\omega+1)}\sum_{k=1}^\infty k^{-2}
\int\limits_{\left\{\lambda\in\sigma(A)\,\middle|\,e^{t\Rep\lambda}>n\right\}\cap \Delta_k}1\,dv(e_k,g^*,\lambda) 
}\\
\hfill \text{by \eqref{vectors1};}
\\
\shoveleft{
=e^{-t(\omega+1)}\sup_{\{g^*\in X^*\,|\,\|g^*\|=1\}}\sum_{k=1}^\infty 
\int\limits_{\left\{\lambda\in\sigma(A)\,\middle|\,e^{t\Rep\lambda}>n\right\}\cap \Delta_k}1\,dv(E_A(\Delta_k)f,g^*,\lambda) 
}\\
\hfill \text{by \eqref{decompose};}
\\
\shoveleft{
= e^{-t(\omega+1)}\sup_{\{g^*\in X^*\,|\,\|g^*\|=1\}}
\int\limits_{\{\lambda\in\sigma(A)\,|\,e^{t\Rep\lambda}>n\}}1\,dv(E_A(\cup_{k=1}^\infty\Delta_k)f,g^*,\lambda)
}\\
\hfill \text{by \eqref{vectors1};}
\\
\shoveleft{
= e^{-t(\omega+1)}\sup_{\{g^*\in X^*\,|\,\|g^*\|=1\}}
\int\limits_{\{\lambda\in\sigma(A)\,|\,e^{t\Rep\lambda}>n\}}1\,dv(f,g^*,\lambda)
\hfill
\text{by \eqref{cond(ii)};}
}\\
\shoveleft{
\le e^{-t(\omega+1)}\sup_{\{g^*\in X^*\,|\,\|g^*\|=1\}}4M\left\|E_A\left(\left\{\lambda\in\sigma(A)\,\middle|\,e^{t\Rep\lambda}>n\right\}\right)f\right\|\|g^*\|
}\\
\shoveleft{
\le 4Me^{-t(\omega+1)}\left\|E_A\left(\left\{\lambda\in\sigma(A)\,\middle|\,e^{t\Rep\lambda}>n\right\}\right)f\right\|
}\\
\hfill
\text{by the strong continuity of the {\it s.m.};}
\\
\hspace{1.2cm}
\to 4Me^{-t(\omega+1)}\left\|E_A\left(\emptyset\right)f\right\|=0,\ n\to\infty.
\hfill
\end{multline}

By Proposition \ref{prop}, \eqref{first1}, \eqref{ffirst1}, \eqref{second1}, and \eqref{ssecond1} jointly imply that 
\[
f\in \bigcap\limits_{t\in\R}D(e^{tA}),
\]
and hence, by Theorem \ref{GWS},
\[
y(t):=e^{tA}f,\ t\in\R,
\]
is a weak solution of equation \eqref{1}.

Let
\begin{equation}\label{functional1}
h^*:=\sum_{k=1}^\infty k^{-2}e_k^*\in X^*,
\end{equation}
the functional being well defined since $\{k^{-2}\}_{k=1}^\infty\in l_1$ and $\|e_k^*\|=1$, $k\in\N$ (see \eqref{H-B1}).

In view of \eqref{H-B1} and \eqref{dist1}, we have:
\begin{equation}\label{funct-dist1}
\langle e_n,h^*\rangle=\langle e_k,k^{-2}e_k^*\rangle=d_k k^{-2}\ge \varepsilon k^{-2},\ k\in\N.
\end{equation}

Hence,
\begin{multline}\label{notin1}
\int\limits_{\sigma(A)}|\lambda|\,dv(f,h^*,\lambda)
\hfill
\text{by \eqref{decompose} as in \eqref{first1};}
\\
\shoveleft{
=\sum_{k=1}^\infty k^{-2}\int\limits_{\sigma(A)\cap \Delta_k}|\lambda|\,dv(e_k,h^*,\lambda)
}\\
\hfill
\text{since, for $\lambda\in \Delta_k$, by
\eqref{disks1}, $|\lambda|\ge k^4$;}
\\
\shoveleft{
\ge \sum_{k=1}^\infty k^{-2}k^4 v(e_k,h^*,\Delta_k)\ge\sum_{k=1}^\infty k^2|\langle E_A(\Delta_k)e_k,h^*\rangle|
}\\
\hfill
\text{by \eqref{ortho1} and \eqref{funct-dist1};}
\\
\hspace{1.2cm}
\ge \sum_{k=1}^\infty k^2 \varepsilon k^{-2}=\infty.
\hfill
\end{multline} 

By Proposition \ref{prop}, \eqref{notin1} implies that
\[
y(0)=f\notin D(A),
\]
which, by Proposition \ref{particular} ($n=1$, $I=\{0\}$) further implies that the weak solution $y(t)=e^{tA}f$, $t\in\R$, of equation \eqref{1} is not strongly differentiable at $0$. 

Now, suppose that the sequence $\{\Rep\lambda_n\}_{n=1}^\infty$
is \textit{unbounded}. 

Therefore, there is a subsequence $\{\Rep\lambda_{n(k)}\}_{k=1}^\infty$ such that
\[
\Rep\lambda_{n(k)}\to \infty \ \text{or}\ \Rep\lambda_{n(k)}\to -\infty,\ k\to \infty.
\]
Let us consider separately each of the two cases.

First, suppose that 
\[
\Rep\lambda_{n(k)}\to \infty,\ k\to \infty
\] 
Then, without loss of generality, we can regard that
\begin{equation}\label{infinity}
\Rep\lambda_{n(k)} \ge k,\ k\in\N.
\end{equation}

Consider the elements
\begin{equation*}
f:=\sum_{k=1}^\infty e^{-n(k)\Rep\lambda_{n(k)}}e_{n(k)}\in X
\ \text{and}\ h:=\sum_{k=1}^\infty e^{-\frac{n(k)}{2}\Rep\lambda_{n(k)}}e_{n(k)}\in X,
\end{equation*}
well defined since, by \eqref{infinity},
\[
\left\{e^{-n(k)\Rep\lambda_{n(k)}}\right\}_{k=1}^\infty,
\left\{e^{-\frac{n(k)}{2}\Rep\lambda_{n(k)}}\right\}_{k=1}^\infty
\in l_1
\]
and $\|e_{n(k)}\|=1$, $k\in\N$ (see \eqref{ortho1}).

By \eqref{ortho1},
\begin{equation}\label{subvectors1}
E_A(\cup_{k=1}^\infty\Delta_{n(k)})f=f\ \text{and}\
E_A(\Delta_{n(k)})f=e^{-n(k)\Rep\lambda_{n(k)}}e_{n(k)},\
k\in\N,
\end{equation}
and
\begin{equation}\label{subvectors12}
E_A(\cup_{k=1}^\infty\Delta_{n(k)})h=h\ \text{and}\
E_A(\Delta_{n(k)})h=e^{-\frac{n(k)}{2}\Rep\lambda_{n(k)}}e_{n(k)},\ k\in\N.
\end{equation}

For any $t\ge 0$ and an arbitrary $g^*\in X^*$, 
\begin{multline}\label{first2}
\int\limits_{\sigma(A)}e^{t\Rep\lambda}\,dv(f,g^*,\lambda)
\hfill
\text{by \eqref{decompose} as in \eqref{first1};}
\\
\shoveleft{
=\sum_{k=1}^\infty e^{-n(k)\Rep\lambda_{n(k)}}\int\limits_{\sigma(A)\cap\Delta_{n(k)}}e^{t\Rep\lambda}\,dv(e_{n(k)},g^*,\lambda)
}\\
\hfill
\text{since, for $\lambda\in \Delta_{n(k)}$, by \eqref{radii1},}\ \Rep\lambda
=\Rep\lambda_{n(k)}+(\Rep\lambda-\Rep\lambda_{n(k)})
\\
\hfill
\le \Rep\lambda_{n(k)}+|\lambda-\lambda_{n(k)}|\le \Rep\lambda_{n(k)}+1;
\\
\shoveleft{
\le \sum_{k=1}^\infty e^{-n(k)\Rep\lambda_{n(k)}}
e^{t(\Rep\lambda_{n(k)}+1)}
\int\limits_{\sigma(A)\cap\Delta_{n(k)}}1\,dv(e_{n(k)},g^*,\lambda)
}\\
\shoveleft{
= e^t\sum_{k=1}^\infty e^{-[n(k)-t]\Rep\lambda_{n(k)}}v(e_{n(k)},g^*,\Delta_{n(k)})
\hfill
\text{by \eqref{tv};}
}\\
\shoveleft{
\le e^t\sum_{k=1}^\infty e^{-[n(k)-t]\Rep\lambda_{n(k)}}4M\|e_{n(k)}\|\|g^*\|
= 4Me^t\|g^*\|\sum_{k=1}^\infty e^{-[n(k)-t]\Rep\lambda_{n(k)}}
}\\
\hspace{1.2cm}
<\infty.
\hfill
\end{multline}

Indeed, for all $k\in \N$ sufficiently large so that
\[
n(k)\ge t+1,
\]
in view of \eqref{infinity}, 
\[
e^{-[n(k)-t]\Rep\lambda_{n(k)}}\le e^{-k}.
\]

For any $t<0$ and an arbitrary $g^*\in X^*$, 
\begin{multline}\label{ffirst2}
\int\limits_{\sigma(A)}e^{t\Rep\lambda}\,dv(f,g^*,\lambda)
\hfill
\text{by \eqref{decompose} as in \eqref{first1};}
\\
\shoveleft{
=\sum_{k=1}^\infty e^{-n(k)\Rep\lambda_{n(k)}}\int\limits_{\sigma(A)\cap\Delta_{n(k)}}e^{t\Rep\lambda}\,dv(e_{n(k)},g^*,\lambda)
}\\
\hfill
\text{since, for $\lambda\in \Delta_{n(k)}$, by \eqref{radii1},}\ \Rep\lambda
=\Rep\lambda_{n(k)}-(\Rep\lambda_{n(k)}-\Rep\lambda)
\\
\hfill
\ge \Rep\lambda_{n(k)}-|\Rep\lambda_{n(k)}-\Rep\lambda|\ge \Rep\lambda_{n(k)}-1;
\\
\shoveleft{
\le \sum_{k=1}^\infty e^{-n(k)\Rep\lambda_{n(k)}}
e^{t(\Rep\lambda_{n(k)}-1)}
\int\limits_{\sigma(A)\cap\Delta_{n(k)}}1\,dv(e_{n(k)},g^*,\lambda)
}\\
\shoveleft{
= e^{-t}\sum_{k=1}^\infty e^{-[n(k)-t]\Rep\lambda_{n(k)}}v(e_{n(k)},g^*,\Delta_{n(k)})
\hfill
\text{by \eqref{tv};}
}\\
\shoveleft{
\le e^{-t}\sum_{k=1}^\infty e^{-[n(k)-t]\Rep\lambda_{n(k)}}4M\|e_{n(k)}\|\|g^*\|
= 4Me^{-t}\|g^*\|\sum_{k=1}^\infty e^{-[n(k)-t]\Rep\lambda_{n(k)}}
}\\
\hspace{1.2cm}
<\infty.
\hfill
\end{multline}

Indeed, for all $k\in \N$, in view of $t<0$,
\[
n(k)-t\ge n(k)\ge 1,
\]
and hence, in view of \eqref{infinity}, 
\[
e^{-[n(k)-t]\Rep\lambda_{n(k)}}\le e^{-k}.
\]

Similarly to \eqref{first2}, for any $t\ge 0$ and an arbitrary $n\in\N$,
\begin{multline}\label{second2}
\sup_{\{g^*\in X^*\,|\,\|g^*\|=1\}}
\int\limits_{\left\{\lambda\in\sigma(A)\,\middle|\,e^{t\Rep\lambda}>n\right\}}e^{t\Rep\lambda}\,dv(f,g^*,\lambda)
\\
\shoveleft{
\le \sup_{\{g^*\in X^*\,|\,\|g^*\|=1\}}e^t\sum_{k=1}^\infty e^{-[n(k)-t]\Rep\lambda_{n(k)}}
\int\limits_{\left\{\lambda\in\sigma(A)\,\middle|\,e^{t\Rep\lambda}>n\right\}\cap \Delta_{n(k)}}1\,dv(e_{n(k)},g^*,\lambda)
}\\
\shoveleft{
=e^t\sup_{\{g^*\in X^*\,|\,\|g^*\|=1\}}\sum_{k=1}^\infty e^{-\left[\frac{n(k)}{2}-t\right]\Rep\lambda_{n(k)}}
e^{-\frac{n(k)}{2}\Rep\lambda_{(k)}}
}\\
\shoveleft{
\int\limits_{\left\{\lambda\in\sigma(A)\,\middle|\,e^{t\Rep\lambda}>n\right\}\cap \Delta_{n(k)}}1\,dv(e_{n(k)},g^*,\lambda)
}\\
\hfill
\text{since, by \eqref{infinity}, there is an $L>0$ such that
$e^{-\left[\frac{n(k)}{2}-t\right]\Rep\lambda_{n(k)}}\le L$, $k\in\N$;}
\\
\shoveleft{
\le Le^t\sup_{\{g^*\in X^*\,|\,\|g^*\|=1\}}\sum_{k=1}^\infty e^{-\frac{n(k)}{2}\Rep\lambda_{n(k)}}
\int\limits_{\left\{\lambda\in\sigma(A)\,\middle|\,e^{t\Rep\lambda}>n\right\}\cap \Delta_{n(k)}}1\,dv(e_{n(k)},g^*,\lambda)
}\\
\hfill
\text{by \eqref{subvectors12};}
\\
\shoveleft{
= Le^t\sup_{\{g^*\in X^*\,|\,\|g^*\|=1\}}\sum_{k=1}^\infty
\int\limits_{\left\{\lambda\in\sigma(A)\,\middle|\,e^{t\Rep\lambda}>n\right\}\cap \Delta_{n(k)}}1\,dv(E_A(\Delta_{n(k)})h,g^*,\lambda)
}\\
\hfill
\text{by \eqref{decompose};}
\\
\shoveleft{
= Le^t\sup_{\{g^*\in X^*\,|\,\|g^*\|=1\}}
\int\limits_{\left\{\lambda\in\sigma(A)\,\middle|\,e^{t\Rep\lambda}>n\right\}}1\,dv(E_A(\cup_{k=1}^\infty\Delta_{n(k)})h,g^*,\lambda)
}\\
\hfill
\text{by \eqref{subvectors12};}
\\
\shoveleft{
=Le^t\sup_{\{g^*\in X^*\,|\,\|g^*\|=1\}}\int\limits_{\{\lambda\in\sigma(A)\,|\,e^{t\Rep\lambda}>n\}}1\,dv(h,g^*,\lambda)
\hfill
\text{by \eqref{cond(ii)};}
}\\
\shoveleft{
\le Le^t\sup_{\{g^*\in X^*\,|\,\|g^*\|=1\}}4M
\left\|E_A\left(\left\{\lambda\in\sigma(A)\,\middle|\,e^{t\Rep\lambda}>n\right\}\right)h\right\|\|g^*\|
}\\
\shoveleft{
\le 4LMe^t\|E_A(\{\lambda\in\sigma(A)\,|\,e^{t\Rep\lambda}>n\})h\|
}\\
\hfill
\text{by the strong continuity of the {\it s.m.};}
\\
\hspace{1.2cm}
\to 4LMe^t\left\|E_A\left(\emptyset\right)h\right\|=0,\ n\to\infty.
\hfill
\end{multline}

Similarly to \eqref{ffirst2}, for any $t<0$ and an arbitrary $n\in\N$,
\begin{multline}\label{ssecond2}
\sup_{\{g^*\in X^*\,|\,\|g^*\|=1\}}
\int\limits_{\left\{\lambda\in\sigma(A)\,\middle|\,e^{t\Rep\lambda}>n\right\}}e^{t\Rep\lambda}\,dv(f,g^*,\lambda)
\\
\shoveleft{
\le \sup_{\{g^*\in X^*\,|\,\|g^*\|=1\}}e^{-t}\sum_{k=1}^\infty e^{-[n(k)-t]\Rep\lambda_{n(k)}}
\int\limits_{\left\{\lambda\in\sigma(A)\,\middle|\,e^{t\Rep\lambda}>n\right\}\cap \Delta_{n(k)}}1\,dv(e_{n(k)},g^*,\lambda)
}\\
\shoveleft{
=e^{-t}\sup_{\{g^*\in X^*\,|\,\|g^*\|=1\}}\sum_{k=1}^\infty e^{-\left[\frac{n(k)}{2}-t\right]\Rep\lambda_{n(k)}}
e^{-\frac{n(k)}{2}\Rep\lambda_{(k)}}
}\\
\shoveleft{
\int\limits_{\left\{\lambda\in\sigma(A)\,\middle|\,e^{t\Rep\lambda}>n\right\}\cap \Delta_{n(k)}}1\,dv(e_{n(k)},g^*,\lambda)
}\\
\hfill
\text{since, by \eqref{infinity}, there is an $L>0$ such that
$e^{-\left[\frac{n(k)}{2}-t\right]\Rep\lambda_{n(k)}}\le L$, $k\in\N$;}
\\
\shoveleft{
\le Le^{-t}\sup_{\{g^*\in X^*\,|\,\|g^*\|=1\}}\sum_{k=1}^\infty e^{-\frac{n(k)}{2}\Rep\lambda_{n(k)}}
\int\limits_{\left\{\lambda\in\sigma(A)\,\middle|\,e^{t\Rep\lambda}>n\right\}\cap \Delta_{n(k)}}1\,dv(e_{n(k)},g^*,\lambda)
}\\
\hfill
\text{by \eqref{subvectors12};}
\\
\shoveleft{
= Le^{-t}\sup_{\{g^*\in X^*\,|\,\|g^*\|=1\}}\sum_{k=1}^\infty
\int\limits_{\left\{\lambda\in\sigma(A)\,\middle|\,e^{t\Rep\lambda}>n\right\}\cap \Delta_{n(k)}}1\,dv(E_A(\Delta_{n(k)})h,g^*,\lambda)
}\\
\hfill
\text{by \eqref{decompose};}
\\
\shoveleft{
= Le^{-t}\sup_{\{g^*\in X^*\,|\,\|g^*\|=1\}}
\int\limits_{\left\{\lambda\in\sigma(A)\,\middle|\,e^{t\Rep\lambda}>n\right\}}1\,dv(E_A(\cup_{k=1}^\infty\Delta_{n(k)})h,g^*,\lambda)
}\\
\hfill
\text{by \eqref{subvectors12};}
\\
\shoveleft{
=Le^{-t}\sup_{\{g^*\in X^*\,|\,\|g^*\|=1\}}\int\limits_{\{\lambda\in\sigma(A)\,|\,e^{t\Rep\lambda}>n\}}1\,dv(h,g^*,\lambda)
\hfill
\text{by \eqref{cond(ii)};}
}\\
\shoveleft{
\le Le^{-t}\sup_{\{g^*\in X^*\,|\,\|g^*\|=1\}}4M
\left\|E_A\left(\left\{\lambda\in\sigma(A)\,\middle|\,e^{t\Rep\lambda}>n\right\}\right)h\right\|\|g^*\|
}\\
\shoveleft{
\le 4LMe^{-t}\|E_A(\{\lambda\in\sigma(A)\,|\,e^{t\Rep\lambda}>n\})h\|
}\\
\hfill
\text{by the strong continuity of the {\it s.m.};}
\\
\hspace{1.2cm}
\to 4LMe^{-t}\left\|E_A\left(\emptyset\right)h\right\|=0,\ n\to\infty.
\hfill
\end{multline}

By Proposition \ref{prop}, \eqref{first2}, \eqref{ffirst2}, \eqref{second2}, and \eqref{ssecond2} jointly imply that 
\[
f\in \bigcap\limits_{t\in\R}D(e^{tA}),
\]
and hence, by Theorem \ref{GWS},
\[
y(t):=e^{tA}f,\ t\in\R,
\]
is a weak solution of equation \eqref{1}.

Since, for any $\lambda \in \Delta_{n(k)}$, $k\in \N$, by \eqref{radii1}, \eqref{infinity},
\begin{multline*}
\Rep\lambda =\Rep\lambda_{n(k)}-(\Rep\lambda_{n(k)}-\Rep\lambda)
\ge
\Rep\lambda_{n(k)}-|\Rep\lambda_{n(k)}-\Rep\lambda|
\\
\ \ \
\ge 
\Rep\lambda_{n(k)}-\varepsilon_{n(k)}
\ge \Rep\lambda_{n(k)}-1/n(k)\ge k-1\ge 0
\hfill
\end{multline*}
and, by \eqref{disks1},
\[
\Rep\lambda<(2n(k))^{-1}\ln|\Imp\lambda|,
\]
we infer that, for any $\lambda \in \Delta_{n(k)}$, $k\in \N$,
\begin{equation*}
|\lambda|\ge|\Imp\lambda|\ge 
e^{2n(k)\Rep\lambda}\ge e^{2n(k)(\Rep\lambda_{n(k)}-1/n(k))}.
\end{equation*}

Using this estimate, for the functional $h^*\in X^*$ defined by \eqref{functional1}, we have:
\begin{multline}\label{notin}
\int\limits_{\sigma(A)}|\lambda|\,dv(f,h^*,\lambda)
\hfill
\text{by \eqref{decompose} as in \eqref{first1};}
\\
\shoveleft{
=\sum_{k=1}^\infty e^{-n(k)\Rep\lambda_{n(k)}}\int\limits_{\Delta_{n(k)}}|\lambda|\,dv(e_{n(k)},h^*,\lambda)
}\\
\shoveleft{
\ge\sum_{k=1}^\infty e^{-n(k)\Rep\lambda_{n(k)}}e^{2n(k)(\Rep\lambda_{n(k)}-1/n(k))}v(e_{n(k)},h^*,\Delta_{n(k)})
}\\
\shoveleft{
= \sum_{k=1}^\infty
e^{-2} e^{n(k)\Rep\lambda_{n(k)}}|\langle E_A(\Delta_{n(k)})e_{n(k)},h^*\rangle|
\hfill
\text{by \eqref{infinity}, \eqref{ortho1}, and \eqref{funct-dist1};}
}\\
\hspace{1.2cm}
\ge \sum_{k=1}^\infty e^{-2}\varepsilon\dfrac{e^{n(k)}}{n(k)^2}=\infty.
\hfill
\end{multline} 

By Proposition \ref{prop}, \eqref{notin1} implies that
\[
y(0)=f\notin D(A),
\]
which, by Proposition \ref{particular} ($n=1$, $I=\{0\}$), further implies that the weak solution $y(t)=e^{tA}f$, $t\in\R$, 
of equation \eqref{1} is not strongly differentiable at $0$. 

The remaining case of
\[
\Rep\lambda_{n(k)}\to -\infty,\ k\to \infty
\] 
is symmetric to the case of
\[
\Rep\lambda_{n(k)}\to \infty,\ k\to \infty
\] 
and is considered in absolutely the same manner, which furnishes a weak solution $y(\cdot)$ of equation \eqref{1} such that
\begin{equation*}
y(0)\not\in D(A),
\end{equation*}
and hence, by Proposition \ref{particular} ($n=1$, $I=\{0\}$), not strongly differentiable at $0$.

With every possibility concerning $\{\Rep\lambda_n\}_{n=1}^\infty$ considered, 
we infer that assuming the opposite to the \textit{``if"} part's premise allows to find a weak solution of \eqref{1} on $[0,\infty)$
that is not strongly differentiable at $0$, and hence, much less strongly infinite differentiable on $\R$.

Thus, the proof by contrapositive of the \textit{``only if" part} is complete and so is the proof of the 
entire statement
\end{proof}

From Theorem \ref{real} and {\cite[Theorem $4.2$]{Markin2011}}, the latter characterizing the strong infinite differentiability of all weak solution
of equation \eqref{+} on $(0,\infty)$, we also obtain 

\begin{cor}\label{case+open}
Let $A$ be a scalar type spectral operator in a complex Banach space. If all weak solutions of equation \eqref{+} are strongly infinite differentiable on $(0,\infty)$, then all weak solutions of equation \eqref{1} are strongly infinite differentiable on $\R$.
\end{cor}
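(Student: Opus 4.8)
The plan is to deduce the statement by putting together Theorem~\ref{real} and {\cite[Theorem~$4.2$]{Markin2011}}. The latter characterizes the strong infinite differentiability on $(0,\infty)$ of all weak solutions of equation \eqref{+} by a spectral condition which, like the one in Theorem~\ref{real}, confines $\sigma(A)$, outside a bounded set, to a certain region; call this region $\mathscr{K}$ and the condition $(\ast)$. By Theorem~\ref{real}, to obtain that every weak solution of equation \eqref{1} is strongly infinitely differentiable on $\R$ it suffices to produce $b_+>0$ and $b_->0$ with $\sigma(A)\setminus{\mathscr L}_{b_-,b_+}$ bounded. Hence everything reduces to showing that $(\ast)$ forces $\mathscr{K}$ to lie, up to a bounded set, inside some ${\mathscr L}_{b_-,b_+}$.

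The region inclusion is the heart of the matter, and it is geometrically clear. The portion of $\mathscr{K}$ on which $\Rep\lambda$ stays bounded is itself bounded and can be absorbed into the exceptional bounded set. On the portion with $\Rep\lambda$ large and positive, $(\ast)$ keeps $\sigma(A)$ in a logarithmic region $\Rep\lambda\ge c\ln|\Imp\lambda|$, which is contained in $\{\lambda\mid\Rep\lambda\ge\max(0,b_+\ln|\Imp\lambda|)\}$ as soon as $0<b_+\le c$. On the portion with $\Rep\lambda$ large and negative, $(\ast)$ forces $|\Imp\lambda|$ to grow slower than $e^{\varepsilon|\Rep\lambda|}$ for every $\varepsilon>0$ (this is exactly what is needed to control the products $|\lambda|^{m}e^{t\Rep\lambda}$, $t>0$, there), so that portion lies in $\{\lambda\mid\Rep\lambda\le\min(0,-b_-\ln|\Imp\lambda|)\}$ for \emph{every} $b_->0$. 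Fixing such a $b_+$ and any $b_->0$, we get that $\sigma(A)\setminus{\mathscr L}_{b_-,b_+}$ is bounded, and Theorem~\ref{real} applies. The main obstacle here is bookkeeping rather than ideas: one has to match the $\min(0,\cdot)$ and $\max(0,\cdot)$ clauses in the definition of ${\mathscr L}_{b_-,b_+}$ against the exact shape of $\mathscr{K}$ near the imaginary axis and to keep track of which pieces are being swept into the bounded set; the point that makes the choice of $b_-$ unrestricted is that the left part of $\mathscr{K}$ is actually \emph{narrower} than a logarithmic region.

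Alternatively, one can argue without invoking {\cite[Theorem~$4.2$]{Markin2011}} at all. Given a weak solution $y(\cdot)=e^{\cdot A}f$ of equation \eqref{1} (Theorem~\ref{GWS}, $f\in\bigcap_{t\in\R}D(e^{tA})$), its restriction to $[0,\infty)$ is a weak solution of equation \eqref{+} (Theorem~\ref{GWS+}), hence by hypothesis belongs to $C^\infty((0,\infty),X)$, so by Proposition~\ref{Prop} one gets $y(t)\in C^\infty(A)$ for every $t>0$. Fix $t_1>0$ and put $w:=y(t_1)=e^{t_1A}f$; using the operational calculus one checks that $w\in C^\infty(A)\cap\bigcap_{t\in\R}D(e^{tA})$. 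The key lemma is that, for every $s\in\R$, the operator $e^{sA}$ maps $C^\infty(A)\cap\bigcap_{t\in\R}D(e^{tA})$ into $C^\infty(A)$: indeed, on $\sigma(A)$ one has the domination
\[
|\lambda|^{m}e^{s\Rep\lambda}\le|\lambda|^{2m}+C_{m,s}\bigl(e^{(2|s|+1)\Rep\lambda}+e^{-(2|s|+1)\Rep\lambda}\bigr),
\]
obtained from $(|\Rep\lambda|+|\Imp\lambda|)^{m}\le 2^{m-1}(|\Rep\lambda|^{m}+|\Imp\lambda|^{m})$, the elementary bound $|\Imp\lambda|^{m}e^{s\Rep\lambda}\le\tfrac12\bigl(|\lambda|^{2m}+e^{2s\Rep\lambda}\bigr)$, and $\sup_{x\ge0}x^{m}e^{-x}<\infty$; since the dominating function is a finite sum of functions of $A$ in whose domains $w$ lies, Proposition~\ref{prop} gives $w\in D\bigl((\lambda^{m}e^{s\lambda})(A)\bigr)$, and then {\cite[Theorem XVIII.$2.11$ (f)]{Dun-SchIII}} gives $e^{sA}w\in D(A^{m})$; as $m$ is arbitrary, $e^{sA}w\in C^\infty(A)$. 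Applying this with $s=t-t_1$ yields $y(t)=e^{(t-t_1)A}w\in C^\infty(A)$ for all $t\in\R$, whence $y(\cdot)\in C^\infty(\R,X)$ by Corollary~\ref{Cor}, and the corollary follows since $y(\cdot)$ was arbitrary.
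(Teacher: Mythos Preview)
Your first approach is exactly the paper's: the corollary is stated immediately after Theorem~\ref{real} with the one-line justification ``From Theorem~\ref{real} and {\cite[Theorem~$4.2$]{Markin2011}}\dots we also obtain,'' and what you spell out---that the left piece of the region $\mathscr{K}$ from {\cite[Theorem~$4.2$]{Markin2011}} is already contained in the left logarithmic sleeve of ${\mathscr L}_{b_-,b_+}$ for \emph{every} $b_->0$, while the right piece matches for a suitable $b_+$---is precisely the implicit comparison of spectral conditions the paper leaves to the reader.

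Your second approach is a genuine alternative. Instead of comparing two spectral characterizations, you argue directly at the level of a single weak solution: take $w=y(t_1)\in C^\infty(A)\cap\bigcap_{t\in\R}D(e^{tA})$ (the latter inclusion via $e^{sA}e^{t_1A}\supseteq e^{(s+t_1)A}$ from the operational calculus) and then show by a pointwise domination and Proposition~\ref{prop} that $e^{sA}$ preserves this intersection into $C^\infty(A)$. This buys you independence from the exact formulation of {\cite[Theorem~$4.2$]{Markin2011}} and makes the corollary self-contained within the present paper; the price is that you must re-do by hand an estimate of the type already embedded in the ``if'' parts of both theorems, and you should be explicit that the domination argument covers condition~(ii) of Proposition~\ref{prop} as well (it does, via \eqref{cond(ii)} and the strong continuity of the spectral measure, exactly as in \eqref{second}). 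The paper's route is shorter once the two characterizations are in hand; yours is more transparent about \emph{why} the implication holds.
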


\begin{rem}
As follows from Theorem \ref{real}, all weak solutions of equation \eqref{1} with a scalar type spectral operator $A$ in a complex Banach space can 
be \textit{strongly infinite differentiable} while the operator $A$ is \textit{unbounded}, e.g., when $A$ is an unbounded \textit{self-adjoint} operator in a complex Hilbert space (cf. {\cite[Theorem $7.1$]{Markin1999}}). This fact contrasts the situation when a closed densely defined linear operator $A$ in a complex Banach space generates a strongly continuous group $\left\{T(t) \right\}_{t\in \R}$ of bounded linear operators, i.e., the associated abstract Cauchy problem is \textit{well-posed} (see Remarks \ref{remsws}), in which case even the (left or right) strong differentiability of all weak solutions of equation \eqref{1} at $0$ immediately implies \textit{boundedness} for $A$ (cf. \cite{Engel-Nagel}).
\end{rem}

\section{The Cases of Normal and Self-Adjoint Operators}

As an important particular case of Theorem \ref{real}, we obtain

\begin{cor}[The Case of a Normal Operator]\label{realnormal}\ \\
Let $A$ be a normal operator in a complex Hilbert space. Every weak solution of equation \eqref{1} is strongly infinite differentiable on $\R$ iff there exist 
$b_+>0$ and $ b_->0$ such that the set $\sigma(A)\setminus {\mathscr L}_{b_-,b_+}$,
where
\begin{equation*}
{\mathscr L}_{b_-,b_+}:=\left\{\lambda \in \C\, \middle|\,
\Rep\lambda \le \min\left(0,-b_-\ln|\Imp\lambda|\right) 
\ \text{or}\ 
\Rep\lambda \ge \max\left(0,b_+\ln|\Imp\lambda|\right)\right\},
\end{equation*}
is bounded (see Fig. \ref{fig:graph2}).
\end{cor}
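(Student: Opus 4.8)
The plan is to deduce this corollary directly from Theorem \ref{real} by invoking the fact, recalled in the Preliminaries, that every normal operator $A$ in a complex Hilbert space is a \emph{scalar type spectral operator} whose spectral measure $E_A(\cdot)$ is precisely the orthogonal resolution of the identity furnished by the spectral theorem for normal operators, with support $\sigma(A)$. In particular the Borel operational calculus for $A$ as a scalar type spectral operator coincides with the usual functional calculus for the normal operator, so that $e^{tA}=\int_{\sigma(A)}e^{t\lambda}\,dE_A(\lambda)$ has the same meaning in both settings, and the notion of \emph{weak solution} of equation \eqref{1} is unchanged.

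Granting this, I would argue as follows. Let $A$ be a normal operator in a complex Hilbert space $H$; then $H$ is in particular a complex Banach space and $A$ is a scalar type spectral operator in $H$ with spectral measure $E_A(\cdot)$. Theorem \ref{real} applies verbatim and yields: every weak solution of \eqref{1} is strongly infinitely differentiable on $\R$ iff there exist $b_+>0$ and $b_->0$ with $\sigma(A)\setminus{\mathscr L}_{b_-,b_+}$ bounded, where ${\mathscr L}_{b_-,b_+}$ is exactly the set displayed in the corollary. Since the set ${\mathscr L}_{b_-,b_+}$ in the corollary's statement is literally the one from Theorem \ref{real}, there is nothing further to check and the proof is complete.

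There is essentially no obstacle here: the only point requiring a word of justification is the identification of normal operators with scalar type spectral operators and the agreement of the two operational calculi (and hence of the two notions of weak solution and of strong infinite differentiability), which is classical and is cited in the Preliminaries. I would therefore keep the proof to two or three sentences, perhaps adding a parenthetical remark that the boundedness $\|E_A(\delta)\|\le 1$ of the orthogonal spectral measure gives $M=1$ in \eqref{bounded}, which is immaterial for the characterization but worth noting for completeness. No separate treatment of the self-adjoint case is needed at this point, as that is subsumed and is commented on in the surrounding remarks.
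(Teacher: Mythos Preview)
Your proposal is correct and matches the paper's own treatment: the corollary is stated immediately after the sentence ``As an important particular case of Theorem \ref{real}, we obtain'' and is given no separate proof, precisely because a normal operator in a complex Hilbert space is a scalar type spectral operator (as recalled in the Preliminaries via \cite{Wermer}), so Theorem \ref{real} applies verbatim. Your parenthetical about $M=1$ is a harmless extra remark not present in the paper.
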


\begin{rem}
Corollary \ref{realnormal} develops the results of paper \cite{Markin1999}, where similar consideration is given to the strong differentiability of the weak solutions of equation \eqref{+} with a normal operator $A$ in a complex Hilbert space on $[0,\infty)$ and $(0,\infty)$.
\end{rem}

From Corollary \ref{case+open}, we immediately obtain the following

\begin{cor}\label{casenormal+open}
Let $A$ be a normal operator in a complex Hilbert space. If all weak solutions of equation \eqref{+} are strongly infinite differentiable on $(0,\infty)$ (cf. {\cite[Theorem $5.2$]{Markin1999}}), then all weak solutions of equation \eqref{1} are strongly infinite differentiable on $\R$.
\end{cor}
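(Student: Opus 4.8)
The plan is to obtain Corollary~\ref{casenormal+open} as an immediate specialization of Corollary~\ref{case+open} to the Hilbert space setting, so the proof will be essentially a single sentence once the relevant identification is in place. The key point, already recorded in the Preliminaries, is that a normal operator $A$ in a complex Hilbert space is a scalar type spectral operator: its resolution of the identity $E_A(\cdot)$ is precisely the projection-valued measure furnished by the spectral theorem for normal operators, and the Borel operational calculus $F\mapsto F(A)=\int_{\sigma(A)}F(\lambda)\,dE_A(\lambda)$ restricts to the classical functional calculus for normal operators. In particular the operator exponentials $e^{tA}$, $t\in\R$, of \eqref{exp}, which enter the description \eqref{expf} of the weak solutions in Theorem~\ref{GWS}, are the usual ones, and the notions of weak solution of \eqref{+} and of \eqref{1} in the Hilbert and Banach settings coincide.

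Granting this, I would simply invoke Corollary~\ref{case+open}. Its hypothesis --- that every weak solution of equation \eqref{+} is strongly infinitely differentiable on $(0,\infty)$ --- is verbatim the hypothesis of the present statement, and its conclusion --- that every weak solution of equation \eqref{1} is strongly infinitely differentiable on $\R$ --- is verbatim the desired conclusion. Since Corollary~\ref{case+open} was proved for an \emph{arbitrary} scalar type spectral operator in a complex Banach space, and our $A$ is such an operator, the implication transfers without any change. Equivalently, one may run the underlying chain directly: \cite[Theorem~5.2]{Markin1999} characterizes the strong infinite differentiability on $(0,\infty)$ of all weak solutions of \eqref{+} by the boundedness of $\sigma(A)\setminus\mathscr{L}_{b_-,b_+}$ for some $b_+>0$, $b_->0$, which is exactly the spectral condition appearing in Corollary~\ref{realnormal} (itself the Hilbert space specialization of Theorem~\ref{real}); so the hypothesis forces that spectral condition, whence Corollary~\ref{realnormal} delivers the conclusion.

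There is no genuine obstacle here. The only point deserving a word of care is the compatibility of the two functional calculi --- the Banach space Borel operational calculus used throughout the paper and the Hilbert space spectral calculus underlying \cite{Markin1999} --- but this is standard, and it is precisely the reason the Hilbert space results of Section~6 are stated as corollaries of the Banach space theorems rather than reproved. Thus the write-up I expect is essentially: a normal operator in a complex Hilbert space is a scalar type spectral operator, so Corollary~\ref{case+open} applies directly.
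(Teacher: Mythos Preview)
Your proposal is correct and matches the paper's approach exactly: the paper simply states that this corollary is obtained ``immediately'' from Corollary~\ref{case+open}, relying on the fact (recorded in the Preliminaries) that a normal operator in a complex Hilbert space is a scalar type spectral operator. Your additional remark about the equivalent route through the spectral condition via \cite[Theorem~5.2]{Markin1999} and Corollary~\ref{realnormal} is also valid, though the paper does not spell it out.
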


Considering that, for a self-adjoint operator $A$ in a complex Hilbert space $X$,
\[
\sigma(A)\subseteq \R
\]
(see, e.g., \cite{Dun-SchII,Plesner}), we further arrive at

\begin{cor}[The Case of a Self-Adjoint Operator]\label{real self-adjoint}\ \\
Every weak solution of equation \eqref{1} 
with a self-adjoint operator $A$ in a complex Hilbert space is strongly infinite differentiable on $\R$.
\end{cor}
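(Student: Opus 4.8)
The plan is to derive Corollary \ref{real self-adjoint} directly from Theorem \ref{real} by exhibiting, for a self-adjoint operator $A$, a concrete admissible choice of the parameters $b_-$ and $b_+$. Since $A$ is self-adjoint, we have $\sigma(A)\subseteq\R$, which means that for every $\lambda\in\sigma(A)$ one has $\Imp\lambda=0$. The key observation is then that the condition defining membership in ${\mathscr L}_{b_-,b_+}$ is automatically satisfied on the whole spectrum, \emph{regardless} of the choice of $b_-,b_+>0$, so that $\sigma(A)\setminus{\mathscr L}_{b_-,b_+}=\emptyset$, which is trivially bounded.

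First I would fix arbitrary $b_->0$ and $b_+>0$ and take any $\lambda\in\sigma(A)$; write $x:=\Rep\lambda\in\R$, and note $\Imp\lambda=0$. The point to be careful about is the term $\ln|\Imp\lambda|=\ln 0$, which one should interpret as $-\infty$ (so that the relevant graphs degenerate as in Fig.~\ref{fig:graph2}); with that convention, $-b_-\ln|\Imp\lambda|=+\infty$ and $b_+\ln|\Imp\lambda|=-\infty$. Then $\min\left(0,-b_-\ln|\Imp\lambda|\right)=0$ and $\max\left(0,b_+\ln|\Imp\lambda|\right)=0$, so the defining disjunction for ${\mathscr L}_{b_-,b_+}$ becomes ``$x\le 0$ or $x\ge 0$'', which holds for every real $x$. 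Hence $\lambda\in{\mathscr L}_{b_-,b_+}$, and since $\lambda\in\sigma(A)$ was arbitrary, $\sigma(A)\subseteq{\mathscr L}_{b_-,b_+}$, i.e. $\sigma(A)\setminus{\mathscr L}_{b_-,b_+}=\emptyset$.

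Having established that the set $\sigma(A)\setminus{\mathscr L}_{b_-,b_+}$ is empty — in particular bounded — for (say) $b_-=b_+=1$, the ``if'' part of Theorem \ref{real} immediately yields that every weak solution of equation \eqref{1} is strongly infinite differentiable on $\R$. Strictly speaking, Theorem \ref{real} is stated for a scalar type spectral operator in a complex Banach space, so I would also remark at the outset that a normal (hence self-adjoint) operator in a complex Hilbert space is a scalar type spectral operator, so the theorem applies; this is already recorded in the Preliminaries and was used for Corollary \ref{realnormal}.

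I do not anticipate a genuine obstacle here: the statement is a degenerate special case of Theorem \ref{real}. The only point requiring a word of care is the interpretation of $\ln|\Imp\lambda|$ when $\Imp\lambda=0$ and the resulting values of the $\min$ and $\max$; once that convention is spelled out, the argument is a one-line set inclusion. An alternative, even shorter route — if one prefers to avoid the $\ln 0$ convention — is to invoke Corollary \ref{casenormal+open} together with the known fact (cf. {\cite[Theorem $5.2$]{Markin1999}}) that for a self-adjoint $A$ all weak solutions of equation \eqref{+} are strongly infinite differentiable on $(0,\infty)$; but the direct verification via Theorem \ref{real} is cleaner and self-contained, so that is the one I would present.
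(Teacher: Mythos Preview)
Your proposal is correct and follows essentially the same approach as the paper: the paper simply observes that $\sigma(A)\subseteq\R$ for a self-adjoint operator and immediately deduces the corollary from the normal-operator case (i.e., from Theorem \ref{real}), leaving the verification that the real axis lies in ${\mathscr L}_{b_-,b_+}$ implicit. You have merely made explicit the handling of $\ln|\Imp\lambda|$ at $\Imp\lambda=0$, which is the only point requiring care; otherwise the argument is identical.
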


Cf. {\cite[Theorem $7.1$]{Markin1999}}.

\section{Inherent Smoothness Improvement Effect}

As is observed in the proof of the \textit{``only if"} part of Theorem \ref{real}, the opposite to the \textit{``if"} part's premise implies that there is a weak solution of equation \eqref{1}, which is not strongly differentiable at $0$. This renders the case of finite strong  differentiability of the weak solutions superfluous and we arrive at the following inherent effect of smoothness improvement.

\begin{prop}
Let $A$ be a scalar type spectral operator in a complex Banach space $(X,\|\cdot\|)$. If every weak solution of equation \eqref{1} is strongly differentiable at $0$, then all of them are strongly infinite differentiable on $\R$.
\end{prop}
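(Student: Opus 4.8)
The plan is to prove the contrapositive, exactly as the proof of the ``only if'' part of Theorem \ref{real} does, and in fact to reuse that argument almost verbatim. Suppose the premise of Theorem \ref{real} fails, i.e.\ for every $b_+>0$ and $b_->0$ the set $\sigma(A)\setminus{\mathscr L}_{b_-,b_+}$ is unbounded. I would then invoke the construction carried out in the ``only if'' part of Theorem \ref{real}: choosing the points $\lambda_n\in\sigma(A)$, the pairwise disjoint disks $\Delta_n$, the unit vectors $e_n\in E_A(\Delta_n)X$, the biorthogonal functionals $e_n^*$, and --- depending on whether $\{\Rep\lambda_n\}$ is bounded or unbounded --- the explicit vector $f$ and functional $h^*$ for which one verifies $f\in\bigcap_{t\in\R}D(e^{tA})$ (so that $y(t):=e^{tA}f$ is, by Theorem \ref{GWS}, a genuine weak solution of \eqref{1}) while $\int_{\sigma(A)}|\lambda|\,dv(f,h^*,\lambda)=\infty$, hence $y(0)=f\notin D(A)$ by Proposition \ref{prop}. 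By Proposition \ref{particular} (with $n=1$, $I=\{0\}$) this $y(\cdot)$ is not strongly differentiable at $0$.

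Thus the failure of the spectral condition produces a weak solution not strongly differentiable at $0$; contrapositively, if every weak solution is strongly differentiable at $0$ then the spectral condition $\sigma(A)\setminus{\mathscr L}_{b_-,b_+}$ bounded holds for some $b_\pm>0$, and then the ``if'' part of Theorem \ref{real} gives that every weak solution is strongly infinite differentiable on $\R$. That chains the two halves of Theorem \ref{real} through the single intermediate hypothesis ``strongly differentiable at $0$,'' which is precisely what the proposition asserts.

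There is essentially no new obstacle here: the entire content was already extracted inside the proof of Theorem \ref{real}, where it is explicitly noted that the opposite of the ``if'' part's premise yields a weak solution failing to be strongly differentiable at $0$. The only thing to be careful about is bookkeeping: one should state the argument as a clean contrapositive --- assume every weak solution is strongly differentiable at $0$, deduce (from the ``only if'' direction, read contrapositively) that the spectral set is bounded for suitable $b_\pm$, then apply the ``if'' direction --- rather than re-deriving any integral estimates. So the write-up is short: quote the ``only if'' part's construction as already establishing that negating the spectral condition violates even strong differentiability at $0$, then feed the resulting spectral condition into the ``if'' part of Theorem \ref{real}.
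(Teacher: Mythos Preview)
Your proposal is correct and matches the paper's approach exactly: the paper does not give a separate formal proof but simply observes, just as you do, that the ``only if'' part of Theorem~\ref{real} already shows that failure of the spectral condition produces a weak solution not strongly differentiable at $0$, whence the contrapositive combined with the ``if'' part yields the proposition. Your write-up is, if anything, more detailed than the paper's one-sentence justification.
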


Cf. {\cite[Proposition $5.1$]{Markin2011}}.

\section{Concluding Remark}

Due to the {\it scalar type spectrality} of the operator $A$, Theorem \ref{real} is stated exclusively in terms of the location of its {\it spectrum} in the complex plane, similarly to the celebrated \textit{Lyapunov stability theorem} \cite{Lyapunov1892} 
(cf. {\cite[Ch. I, Theorem 2.10]{Engel-Nagel}}),
and thus, is an intrinsically qualitative statement (cf. \cite{Pazy1968,Markin2011}).

\section{Acknowledgments}

The author extends sincere appreciation to his colleague, Dr.~Maria Nogin of the Department of Mathematics, California State University, Fresno, for her kind assistance with the graphics.

\section{Conflicts of Interest}

The author declares that there are no conflicts of interest regarding the publication of this paper.



\begin{thebibliography}{99}
\bibitem{Ball}
{J.M. Ball},	
\textit{Strongly continuous semigroups, weak solutions, and the variation of constants formula},	
{Proc. Amer. Math. Soc.}	
\textbf{63}	
{(1977)},	
{no.~2},	
{101--107}.	
\bibitem{Survey58}
{N. Dunford},	
\textit{A survey of the theory of spectral operators}, 
{Bull. Amer. Math. Soc.}	
\textbf{64}	
{(1958)},	
{217--274}.	
\bibitem{Dun-SchI}
{N. Dunford and J.T. Schwartz with the assistance of W.G. Bade and R.G. Bartle},	
\textit{Linear Operators. Part I: General Theory},	
{Interscience Publishers},	
{New York},		
{1958}.		
\bibitem{Dun-SchII}
{\bysame},	
\textit{Linear Operators. Part II: Spectral Theory. Self Adjoint Operators in Hilbert Space}, 
{Interscience Publishers},	
{New York},		
{1963}.		
\bibitem{Dun-SchIII}
{\bysame},	
\textit{Linear Operators. Part III: Spectral Operators}, 
{Interscience Publishers},	
{New York},		
{1971}.		
\bibitem{Engel-Nagel}
{K.-J. Engel and R. Nagel},	
\textit{One-Parameter Semigroups
for Linear Evolution Equations}, 
{Graduate Texts in Mathematics, vol. 194},	
{Springer-Verlag},	
{New York},		
{2000}.		
\bibitem{Halmos}
{P.R. Halmos},	
\textit{Measure Theory}, 
{Graduate Texts in Mathematics, vol. 18},	
{Springer-Verlag},	
{New York},		
{1974}.		
\bibitem{Hille-Phillips}
{E. Hille and R.S. Phillips},	
\textit{Functional Analysis and Semi-groups},	
{American Mathematical Society Colloquium Publications, vol.~31},	
{American Mathematical Society},	
{Providence, RI},		
{1957}.		
\bibitem{Lyapunov1892}
{A.M. Lyapunov},	
\textit{Stability of Motion},	
{Ph.D. Thesis},	
{Kharkov},	
{1892},		
{English Translation},	
{Academic Press},	
{New York-London},		
{1966}.	
\bibitem{Markin1999}
{M.V. Markin},	
\textit{On the strong smoothness of weak solutions of an abstract evolution equation. I. Differentiability},	
{Appl. Anal.}	
\textbf{73}	
{(1999)},	
{no.~3-4},	
{573--606}.	
\bibitem{Markin2002(1)}
{\bysame},	
\textit{On an abstract evolution equation with a spectral operator of scalar type},	
{Int. J. Math. Math. Sci.}	
\textbf{32}	
{(2002)},	
{no.~9},	
{555--563}.	
\bibitem{Markin2002(2)}
{\bysame},	
\textit{A note on the spectral operators of scalar type and semigroups of bounded linear operators},	
{Ibid.}	
\textbf{32}	
{(2002)},	
{no.~10},	
{635--640}.	
\bibitem{Markin2004(1)}
{\bysame},	
\textit{On scalar type spectral operators, infinite differentiable and Gevrey ultradifferentiable $C_0$-semigroups},	
{Ibid.}	
\textbf{2004}	
{(2004)},	
{no.~45},	
{2401--2422}.	
\bibitem{Markin2004(2)}
{\bysame},	
\textit{On the Carleman classes of vectors of a scalar type spectral operator},	
{Ibid.}	
\textbf{2004}	
{(2004)},	
{no.~60},	
{3219--3235}.	
\bibitem{Markin2011}
{\bysame},	
\textit{On the differentiability of weak solutions of an abstract evolution equation with a scalar type spectral operator},	
{Ibid.}	
\textbf{2011}	
{(2011)},	
{Article ID 825951},	
{27 pp.}	
\bibitem{Markin2015}
{\bysame},	
\textit{On the Carleman ultradifferentiable vectors of a scalar type spectral operator},	
{Methods Funct. Anal. Topology} 
\textbf{21}	
{(2015)},	
{no.~4},	
{361--369}.	
\bibitem{Markin2018(2)}
{\bysame},	
\textit{On the mean ergodicity of weak solutions of an abstract evolution equation},	
{Ibid.}	
\textbf{24}	
{(2018)},	
{no.~1},	
{53--70}.	
\bibitem{Pazy1968}
{A. Pazy},	
\textit{On the differentiability and compactness of semigroups of linear operators},	
{J. Math. Mech.}	
\textbf{17}	
{(1968)},	
{no.~12},	
{1131--1141}.	
\bibitem{Plesner}
{A.I. Plesner},	
\textit{Spectral Theory of Linear Operators},	
{Nauka},	
{Moscow},		
{1965}		
{(Russian)}.
\bibitem{Wermer}
{J. Wermer},	
\textit{Commuting spectral measures on Hilbert space},	
{Pacific J. Math.}	
\textbf{4}	
{(1954)},	
{no.~3},	
{355--361}.	
\end{thebibliography}
\end{document}